\theoremstyle{theorem}
\newtheorem{theorem}{Theorem}
\newtheorem{corollary}[theorem]{Corollary}
\newtheorem{lemma}[theorem]{Lemma}
\theoremstyle{definition}
\newtheorem{definition}[theorem]{Definition}
\newtheorem{construction}[theorem]{Construction}
\newtheorem{example}[theorem]{Example}
\newtheorem{fact}[theorem]{Fact}
\newtheorem{remark}[theorem]{Remark}
\def\C{\mathcal{C}}
\def\D{\mathcal{D}}
\def\A{\mathcal{A}}
\def\I{\mathcal{I}}
\def\F{\mathcal{F}}
\def\V{\mathcal{V}}
\def\G{\mathcal{G}}
\def\T{\mathcal{T}}
\def\B{\mathcal{B}}
\def\Im{{\rm Im}}
\def\cat{{\rm cat}}
\def\id{{\rm id}}
\def\Z{\mathbb{Z}}
\def\D{\mathcal{D}}
\def\T{\mathcal{T}}
\def\A{\mathcal{A}}
\def\H{{\rm H}}
\def\pt{{\rm pt}}
\DeclareMathOperator{\ord}{ord}
\DeclareMathOperator{\im}{Im}
\DeclareMathOperator{\Coker}{Coker}
\DeclareMathOperator{\Cone}{Cone}
\DeclareMathOperator{\Tor}{Tor}
\newcommand{\low}[2]{{_{\lceil}#1_{\rceil #2}}}
\newcommand{\Vect}{\mathbf{Vect}}
\newcommand{\Abel}{\mathbf{Abel}}
\newcommand{\Cochain}{\mathbf{Cochain}}
\newcommand{\ko}{\Bbbk}
\numberwithin{theorem}{section}
\newcommand{\inc}[2]{{[#1\!:\!#2]}}
\begin{document}

\title{Simplifications of finite spaces equipped with sheaves}

\author{Artem Malko}
\address{National Research University Higher School of Economics, Moscow 101000, Russia}
\email{metraoklam@gmail.com}

\maketitle

\begin{abstract}
    Following the classical results of Stong, we introduce a cohomological analogue of a core of a finite sheaved topological space and propose an algorithm for simplification in this category. In particular we  generalize the notion of beat vertices and show that if a vertex of a sheaved space has topologically acyclic downset (with trivial coefficients), then its removal preserves the sheaf cohomology.
\end{abstract}

\section{Introduction}

\subsection{Overview}

This paper is a part of a big project aimed at development of algorithmic approaches in sheaf theory. Generally, sheaf theory is a huge part of homological algebra finding lots of application in theoretical mathematics, especially in algebraic geometry and algebraic topology \cite{algem}. In these areas specialists usually consider and study infinite-dimensional sheaves on infinite topological spaces with function- or differential forms sheaves being one of examples. Of course, these sheaves are non-tractable from algorithmic point of view. However, in the last decade there has been an increasing interest in high-level homological methods in the areas of data analysis and more generally, in computer science. Simplicial homology and more general persistent homology are an important part of topological data analysis. \cite{comtop}, \cite{eltop}

At the same time, currently there is no available library to compute cohomology of finite-dimensional sheaves on finite topological spaces, despite the fact that these objects can be effectively encoded and the problem itself is algorithmically solvable. Yet, even at a theoretical level not so many optimal algorithms exists to deal with mathematically loaded data structures such as finite topological spaces, sheaves, differential complexes, etc. Some of them are implemented in SageMath~\cite{sagemath} and Macaulay2~\cite{macaulay2}, but there is no functionality of general sheaves on finite structures. The aim of our project is to develop approaches to effective calculations of sheaf cohomology.

In homotopy theory of finite topological spaces there is a notion of a core of a topological space, introduced by Stong \cite{stong}. Recently this notion found application in homology and persistent homology computations \cite{compute}: instead of computing homology of the given space, it is practically efficient to first reduce this space to its core, and then compute homology. In the current paper we propose a similar approach to simplify a given space with a given sheaf on it. We introduce the notion of a core of sheaved space and prove that cohomology of core is isomorphic to the cohomology of the original sheaf, see Theorem \ref{result_main} for precise formulation. In the following we plan to incorporate this approach in a C++ library which allows to compute cohomology.

\subsection{Motivation from machine learning}
Sheaves on cellular complexes are widely used in modern computer science and machine learning. One can recall Sheaf Neural Networks (SNNs) \cite{sheafnn} as a generalization of graph neural networks. The main problems that SNNs aim to address are over-smoothing and heterophily. Another approach is through sheaf diffusion models \cite{sheafdiff}, whose core idea involves the incorporation of sheaf Laplacians to enable anisotropic information flow across graphs. These models seek to capture both local and global interactions by allowing more refined control over how data propagates.

However, most of the current applications of Sheaf Neural Networks and sheaf diffusion models are focused on practical use cases, leaving substantial room for purely mathematical optimizations. For instance, one such area of improvement is enhancing the algorithmic efficiency of verifying the commutativity of diagrams. A sheaf on a poset can be described as a commutative diagram; more formally, these are equivalent categories.  We are currently working on another paper that proposes an asymptotically optimal algorithm for verifying diagram commutativity.

Another classic example of such an optimization applied in practice lies in the calculation of homology and Betti numbers of partially ordered sets (simplicial complexes). Non-constant sheaves over finite posets appear in the study of torus actions on manifolds \cite{gkmsheaves} \cite{baird}, where exact automatic cohomology computation supports purely theoretical results \cite{ayzen}.

\subsection{Related definitions}
Let $S$ be a finite partially ordered set. Recall that Hasse diagram representation of poset $S$ is oriented graph $G = (S, E)$ such that there exists edge from $u$ to $v$ iff $u < v$ and there is no $t$ such that $u < t < v$.

\begin{definition}
    An element $s \in S$ is called \textit{upbeat} if there is $a > s$ such that $b > s$ implies that $b \geq a$. An element $s \in S$ is called \textit{downbeat} if there is $a < s$ such that $b < s$ implies that $b \leq a$. An element $s \in S$ is a \textit{beat} point if it is either an upbeat point or a downbeat point.
\end{definition}

\begin{definition}
    Poset $S$ is called \textit{core} if it has no beat vertices. Poset $T$ is called \textit{core} of $S$ if $T$ is core and can be obtained from $S$ by consecutive deletion of beats.
\end{definition}

\begin{theorem}[R. Stong \cite{stong}] 
    All cores of $S$ are isomorphic.
\end{theorem}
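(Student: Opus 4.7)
My plan is to follow Stong's original two-step strategy. \emph{Step 1} shows that removing a single beat point does not change the homotopy type of the underlying poset (viewed as a finite $T_0$ space via its Alexandrov topology); \emph{Step 2} shows that any two cores that happen to be homotopy equivalent must already be isomorphic as posets. For Step 1, suppose $s \in S$ is upbeat with witness $a$, so that every $b > s$ satisfies $b \geq a$. I would define $r\colon S \to S \setminus \{s\}$ by $r(s) = a$ and $r(t) = t$ otherwise, check that $r$ is order-preserving, and note that for the inclusion $\iota\colon S \setminus\{s\} \hookrightarrow S$ we have $r \circ \iota = \id_{S \setminus \{s\}}$ and $\iota \circ r \geq \id_S$ pointwise. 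Since order-preserving self-maps that are pointwise comparable are homotopic, this realises $S \setminus \{s\}$ as a strong deformation retract of $S$; the downbeat case is dual. Iterating, every core of $S$ is homotopy equivalent to $S$, and hence any two cores of $S$ are homotopy equivalent to each other.

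For Step 2, let $T_1, T_2$ be cores of $S$ and let $f\colon T_1 \to T_2$, $g\colon T_2 \to T_1$ be mutually homotopy-inverse order-preserving maps. I use the standard combinatorial description of the homotopy relation for finite spaces: $gf \simeq \id_{T_1}$ means there is a fence $gf = h_0, h_1, \dots, h_n = \id_{T_1}$ of order-preserving self-maps with each consecutive pair $h_i, h_{i+1}$ comparable pointwise. The entire argument then reduces to the following key lemma: if $T$ is a core and $h\colon T \to T$ is order-preserving with $h \leq \id_T$ (or $h \geq \id_T$) pointwise, then $h = \id_T$.

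To prove the lemma in the case $h \leq \id_T$, suppose for contradiction $h \neq \id_T$ and pick $x \in T$ that is minimal (in $T$) with the property $h(x) < x$. By minimality, $h(y) = y$ for every $y < x$, and order-preservation gives $y = h(y) \leq h(x)$. Thus $h(x)$ is an element strictly below $x$ that dominates every element strictly below $x$; this says exactly that $x$ is a downbeat point, contradicting the assumption that $T$ is a core. The case $h \geq \id_T$ is dual.

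To finish, I induct on $n$: the map $h_{n-1}$ is comparable to $h_n = \id_{T_1}$, hence equal to $\id_{T_1}$ by the key lemma, giving a shorter fence from $gf$ to $\id_{T_1}$. Iterating, $gf = \id_{T_1}$ and symmetrically $fg = \id_{T_2}$, so $f$ and $g$ are mutually inverse order-preserving bijections, i.e.\ poset isomorphisms. The main obstacle is the key lemma: it is the unique place where the hypothesis ``no beat points'' is genuinely used, and it is the content of Stong's insight. Once one grants the fence characterization of homotopy between order-preserving maps, everything else in the argument is formal bookkeeping.
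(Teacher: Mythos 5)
Your proof is correct, and it is essentially Stong's original homotopy-theoretic argument: beat removal is a strong deformation retraction, and a minimal finite space admits no order-preserving self-map comparable to (but different from) the identity. The paper itself does not prove this statement --- it is quoted from Stong with a citation --- so the instructive comparison is with the paper's proof of its own generalization, Theorem~\ref{isom} (all cores of a \emph{sheaved} space are isomorphic), which takes a genuinely different, purely combinatorial route: beat collapsing is treated as an abstract rewriting system, local confluence is checked by hand, and Newman's Diamond Lemma delivers uniqueness of the normal form. Your route buys strictly more in the unsheaved setting: it exhibits the core as a homotopy invariant of the finite space, which is exactly what underlies the Remark that $X_S\simeq X_T$ iff the cores are isomorphic; the rewriting argument only says that the collapse process starting from a fixed $S$ has a well-defined output. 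The rewriting argument, in turn, is more portable: it works in categories (such as the sheaved spaces of this paper) where no homotopy theory has been set up, which is precisely why the author uses it. Two small caveats on your write-up: you rely on the fence characterization of homotopy between maps of finite spaces and on the fact that pointwise-comparable order-preserving maps are homotopic; both are true but are themselves theorems of Stong, so a self-contained account would need to include them. Your key lemma and its minimal-counterexample proof (the minimal $x$ with $h(x)<x$ is forced to be a downbeat point) are stated and argued correctly, and that is indeed the only place where the no-beat-point hypothesis is used.
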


Next theorem needs some topological notation. There are two natural ways of converting partially ordered set into topological space: an Alexandrov topology and geometric implementation as simplicial complex. They are linked by McCord Theorem \cite{mccord} that we will formulate later.

\begin{definition}
    An \textit{Alexandrov topology} \( X_S \) on a poset \( S \) is a topology in which the open sets are upper order ideals in \( S \). In other words, a set \( U \) is open in the topology \( X_S \) if and only if for any \( s \in U \) and \( t > s \), it holds that \( t \in U \).
\end{definition}

Let $K$ be a finite simplicial complex constructed w.r.t. to partial order given by poset $S$ and $|K|$ be its geometrical interpretation.

\begin{theorem}[M. McCord \cite{mccord}]
The topological spaces $|K|$ and $X_S$ are weakly equivalent. In particular, their singular (co)homology are isomorphic. 
\end{theorem}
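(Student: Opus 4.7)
The plan is to exhibit an explicit continuous map $\mu : |K| \to X_S$, the McCord map, and verify it is a weak equivalence via a basis-of-open-sets argument. For $x \in |K|$ let $\supp(x) \subseteq S$ be the unique chain whose open simplex contains $x$, and set $\mu(x) = \max(\supp(x))$. Since the open sets of $X_S$ are upper order ideals, the minimal open neighborhood of $s \in X_S$ is the principal up-set $U_s = \{t \in S : t \geq s\}$, and continuity of $\mu$ follows from
\[
\mu^{-1}(U_s) \;=\; \{x \in |K| : \max(\supp(x)) \geq s\} \;=\; \bigcup_{t \geq s} \mathrm{st}(t),
\]
which is a union of open stars in $|K|$ and hence open.

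Next I would check that each restricted map $\mu^{-1}(U_s) \to U_s$ is a weak equivalence by showing that both source and target are contractible. Let $K_{U_s}$ denote the full subcomplex of $K$ on the vertex set $U_s$. Because $s$ is the minimum of $U_s$, every chain in $U_s$ either contains $s$ or extends by prepending $s$, so $|K_{U_s}|$ is a simplicial cone with apex $s$ and is contractible. A straight-line homotopy that sets the barycentric coordinates at vertices outside $U_s$ to zero and renormalizes the remaining ones yields a strong deformation retract of $\mu^{-1}(U_s)$ onto $|K_{U_s}|$; the interpolating points remain inside the closed simplex spanned by $\supp(x)$, so the homotopy stays inside $|K|$ and inside $\mu^{-1}(U_s)$ throughout. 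For the target, $U_s \subseteq X_S$ is a finite Alexandrov space with minimum $s$, and such spaces are contractible: the constant map $c_s$ at $s$ satisfies $c_s \leq \id$ pointwise, and the order-homotopy $H(x, 0) = s$, $H(x, \tau) = x$ for $\tau > 0$ is continuous in the Alexandrov topology (its preimage of any up-set is either $U_s \times [0,1]$ or $V \times (0,1]$, both open).

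Finally, I would invoke McCord's weak-equivalence criterion: if $f : X \to Y$ is a continuous map and $Y$ admits an open basis $\{V_\alpha\}$ in which each point has a unique smallest basis neighborhood and each restriction $f^{-1}(V_\alpha) \to V_\alpha$ is a weak equivalence, then $f$ itself is a weak equivalence. The collection $\{U_s\}_{s \in S}$ is precisely such a basis for $X_S$, and the preceding step supplies the hypothesis. Hence $\mu$ is a weak equivalence, and the isomorphism on singular (co)homology follows automatically, since weak equivalences induce isomorphisms on all singular (co)homology groups.

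The main obstacle is twofold: first, the careful bookkeeping behind the deformation retract of $\mu^{-1}(U_s)$ onto $|K_{U_s}|$ (in particular, checking that the homotopy never leaves $|K|$ as supports shrink), and second, the precise formulation and application of McCord's covering criterion to a non-Hausdorff target. Both steps are standard but delicate, and the full technical details are supplied in McCord's original 1966 paper.
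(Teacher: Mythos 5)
The paper does not prove this statement at all: it is quoted verbatim as a classical result of McCord and used as background, so there is no in-paper proof to compare against. Your sketch is a correct reconstruction of McCord's original argument: the map $\mu(x)=\max(\supp(x))$ is the right adaptation of the McCord map to the paper's convention that open sets of $X_S$ are \emph{upper} order ideals (with the more common down-set convention one takes the minimum of the support instead), the identification $\mu^{-1}(U_s)=\bigcup_{t\ge s}\mathrm{st}(t)$ and the deformation retraction of this open star neighborhood onto the full subcomplex $|K_{U_s}|$ (a cone with apex the minimum $s$) are exactly the steps in McCord's proof, and the order-homotopy showing that a finite poset with a minimum is contractible in the Alexandrov topology is standard. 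Your appeal to the basis-like open cover criterion is also the correct closing step; note only that the collection $\{U_s\}_{s\in S}$ of minimal open sets is basis-like precisely because $U_s\cap U_t=\bigcup_{r\ge s,\,r\ge t}U_r$, which is the condition McCord actually requires, and your ``unique smallest basis neighborhood'' phrasing is a convenient sufficient form of it. I see no gap beyond the technical details you yourself flag as residing in McCord's paper.
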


\begin{theorem}[R. Stong \cite{stong}] 
    Let $S$ be a finite poset, denote by $T$ its core. Then $X_S$ is homotopy equivalent to $X_T$. 
\end{theorem}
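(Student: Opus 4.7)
The plan is to reduce the theorem to the single-step statement: if $s \in S$ is a beat point and $S' = S \setminus \{s\}$, then the Alexandrov spaces $X_S$ and $X_{S'}$ are homotopy equivalent. Iterating this along the beat-removal sequence taking $S$ to its core $T$ then yields $X_S \simeq X_T$.

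For the single-step claim I would use the standard translation between posets and Alexandrov spaces: a map of posets is continuous as a map of Alexandrov spaces if and only if it is order-preserving. Suppose $s$ is upbeat, with $a > s$ such that every $b > s$ satisfies $b \geq a$. Define $r \colon S \to S'$ by $r(s) = a$ and $r(t) = t$ for $t \neq s$; order-preservation reduces to two cases --- if $t < s$ then $t < a$ since $t < s < a$, and if $t > s$ then $t \geq a$ by the upbeat hypothesis. Writing $i \colon X_{S'} \hookrightarrow X_S$ for the inclusion, one has $r \circ i = \id_{X_{S'}}$ on the nose, while $(i \circ r)(t) \geq t$ pointwise, with equality everywhere except at $s$.

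The remaining --- and main --- task is to upgrade the pointwise inequality $i \circ r \geq \id_{X_S}$ to a genuine homotopy. The key lemma is: if $f, g \colon X \to Y$ are continuous maps between Alexandrov spaces with $f(x) \leq g(x)$ for every $x$, then $f \simeq g$. One constructs an explicit homotopy $H \colon X \times [0, 1] \to Y$ by setting $H(x, 0) = f(x)$ and $H(x, t) = g(x)$ for $t > 0$; continuity follows from the formula
\[
H^{-1}(U) \;=\; \bigl(f^{-1}(U) \times [0, 1)\bigr) \cup \bigl(g^{-1}(U) \times (0, 1]\bigr),
\]
valid for each upward-closed $U \subseteq Y$, using that $f \leq g$ together with $U$ upward-closed gives $f^{-1}(U) \subseteq g^{-1}(U)$. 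The downbeat case is dual, with $s$ sent to the unique maximal lower cover and the inequality reversed.

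The chief obstacle lies precisely in this last paragraph: bridging the pointwise order on finite-poset maps into a continuous homotopy parameterised by $[0, 1]$, rather than merely recording an edge in some combinatorial homotopy preorder. Once that lemma is in hand, the combinatorial ingredients --- the explicit retraction $r$, its order-preservation, and the identity $r \circ i = \id_{X_{S'}}$ --- are straightforward verifications, and the induction over beat removals assembles them into the full statement.
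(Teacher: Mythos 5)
Your proposal is correct, and it is the classical argument: the paper does not actually prove this statement (it is quoted from Stong with a citation), and the proof you reconstruct --- the order-preserving retraction $r$ onto $S\setminus\{s\}$ for a beat point $s$, together with the lemma that pointwise-comparable continuous maps of Alexandrov spaces are homotopic via the ``jump at $t=0$'' homotopy, iterated along the beat-removal sequence --- is precisely the standard proof found in Stong's paper and in Barmak's treatment. The one step you flag as the chief obstacle is handled correctly: since $f^{-1}(U)\subseteq g^{-1}(U)$ for upward-closed $U$, your displayed union of open boxes really does equal $H^{-1}(U)$, so $H$ is continuous and no gap remains.
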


\begin{remark}[R. Stong \cite{stong}] 
    Moreover, for two finite posets $S, T$ we have $X_S \simeq X_T$ if and only if the cores of $S$ and $T$ are isomorphic.
\end{remark}

This classic result, initially motivated by problem of classification of partially ordered sets, provides an algorithm for speeding up calculation of homology of partially ordered sets and simplicial complexes. This was clearly demonstrated by J. Boissonnat, S. Pritam and D. Pareek in 2018 \cite{compute}.

A homotopical analogue of our approach to the category of the finite ringed spaces can be found in \cite{ringedspaces}.

\subsection{Results}
We propose a generalization of the concept of cores of posets to posets equipped with a sheaf. In particular we introduce the notion of $\textit{cores}$ of posets with a sheaf and prove that all cores of $(S, \F)$ are isomorphic and have the same cohomology. Finally, we generalize the notion of beat vertex to acyclic downsets.

Let us introduce some algebraic notation. Let $\V$ be an abelian category, for example $\Abel$ (the category of abelian groups with group homomorphisms as morphisms) or $\ko \Vect$ (the category of vector spaces over a field $\ko$).

\begin{definition}
    Category $\mathrm{SheavedSpaces}$ consists of pairs $(S, \F)$ where $S$ is finite poset and $\F$ is a sheaf defined on $X_S$ with values in $\V$. Morphisms $\A: (S, \F) \to (T, \G)$ are then defined as $\A = (a, A)$, where $a: S \hookrightarrow T$, $A: a^{\ast} \G \to \F$.
\end{definition}

Now with category formalized we are ready to formulate main results.

\begin{definition}
    Let $(S, \F)$ be a sheaved space. An element $s \in S$ is called \textit{downbeat} if $s$ is downbeat in $S$. An element $s \in S$ is called \textit{upbeat} if $s$ is upbeat in $S$ and corresponding restriction map (i.e., the map that corresponds to the only edge outgoing from $s$ in Hasse diagram notation) is isomorphism. An element $s \in S$ is a \textit{beat} point if it is either an upbeat point or a downbeat point.
\end{definition}

\begin{theorem}
    Let $v$ be a beat element of a sheaved space $(S, \F)$. Denote by $S' = S \setminus \{ v \}$. The canonical inclusion $S' \to S$ induces the natural isomorphism

    $$H^{\ast}(S, \F) \cong H^{\ast}(S', \F|_{S'}) $$
\end{theorem}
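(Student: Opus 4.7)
My plan is to express $H^{\ast}(S, \F)$ as the cohomology of the Čech-type chain-of-chains complex $C^\bullet(S, \F)$ with $C^n(S, \F) = \prod_{s_0 < \cdots < s_n} \F_{s_n}$, a standard model for sheaf cohomology on a finite Alexandrov space. The canonical inclusion $S' \hookrightarrow S$ induces a surjective chain map $C^\bullet(S, \F) \twoheadrightarrow C^\bullet(S', \F|_{S'})$ that projects away the components indexed by chains through $v$; its kernel $B^\bullet$ is a subcomplex, and the associated short exact sequence gives a long exact sequence in cohomology. Thus the theorem reduces to the claim that $B^\bullet$ is acyclic, and this is what I will prove. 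Every chain through $v$ decomposes uniquely as $\tau < v < \sigma$ with $\tau \subseteq S_{<v}$ and $\sigma \subseteq S_{>v}$ (each possibly empty), and its attached stalk is $\F_{\max\sigma}$ when $\sigma \neq \emptyset$ and $\F_v$ when $\sigma = \emptyset$.

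For the \emph{downbeat} case, I would introduce the finite decreasing filtration $F^p B^\bullet$ consisting of cochains supported on chains with $|\sigma| \geq p$. On the associated graded $\mathrm{gr}^p B^\bullet$ the differential only extends $\tau$, so this piece decomposes as a finite biproduct over chains $\sigma$ of size $p$ of copies of the augmented cochain complex of the order complex $\Delta(S_{<v})$ with constant coefficients $\F_{\max(\{v\}\cup\sigma)}$, appropriately shifted. Since $v$ is downbeat, $S_{<v}$ has a maximum $a$, so $\Delta(S_{<v})$ is a simplicial cone with apex $a$; the contracting homotopy ``$\tau \mapsto \tau \cup \{a\}$'' is levelwise in the coefficient object, so the augmented cochain complex is acyclic for any coefficients in $\V$. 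The bounded filtration spectral sequence then yields $H^{\ast}(B^\bullet) = 0$.

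The \emph{upbeat} case uses the dual filtration by $|\tau|$. The graded piece $\mathrm{gr}^p B^\bullet$ is a finite biproduct over $\tau$ of copies of a single complex $E^\bullet$ with $E^0 = \F_v$ and $E^n = \prod_{\sigma \subseteq S_{>v},\ |\sigma|=n} \F_{\max\sigma}$ for $n \geq 1$; up to a global sign this is the complex $\F_v \xrightarrow{\epsilon} C^\bullet(S_{>v}, \F|_{S_{>v}})$, where $\epsilon(x) = (\mathrm{res}_{v \to t} x)_{t > v}$ places $\F_v$ in degree $0$. Since $v$ is upbeat, $S_{>v}$ has a minimum $b$, and the global-sections functor on $\mathrm{Sh}(X_{S_{>v}})$ coincides with the (exact) stalk functor at $b$; therefore $H^{\geq 1}(S_{>v}, \F|_{S_{>v}}) = 0$ and $H^0(S_{>v}, \F|_{S_{>v}}) \cong \F_b$. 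Under this identification the induced map $\epsilon_{\ast}$ becomes the restriction $\F_v \to \F_b$, which is an isomorphism by the upbeat sheaf hypothesis; hence $E^\bullet$ is acyclic, and the proof concludes as in the downbeat case.

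The main obstacle is the upbeat step: one has to match the augmentation stalk $\F_v$ with $H^0$ of the ``above-$v$'' subcomplex, and only the sheaf condition that $\F_v \to \F_b$ be an isomorphism makes this augmentation a quasi-isomorphism. By contrast, the downbeat case requires no sheaf hypothesis, since contractibility of $\Delta(S_{<v})$ alone forces acyclicity of the relevant augmented complex with arbitrary constant coefficients; the rest of the argument is bookkeeping of chains, signs, and filtrations.
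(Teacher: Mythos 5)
Your proof is correct, but it takes a genuinely different route from the paper's. The paper proves this theorem (Theorem \ref{cohom_inv}) with derived functors: it first shows by explicit mutually inverse maps that $\Gamma(S,\F)\cong\Gamma(S',\F|_{S'})$ for a beat $v$ (Lemmas \ref{global_down}, \ref{global_up}), then shows that restricting a Godement resolution to $S'$ again yields an injective resolution (Lemmas \ref{swinj}, \ref{injres}), so the two complexes of global sections literally coincide. You instead work in the Roos/order-complex model (the paper's Construction \ref{rossConst} and Theorem \ref{thmGeneralSheafToLocCoeffSyst}), reduce to acyclicity of the relative subcomplex $B^\bullet$ of cochains supported on chains through $v$, and kill $B^\bullet$ by filtering on the size of the part of the chain above (resp.\ below) $v$: in the downbeat case the graded pieces are augmented cochain complexes of the cone $\ord(S_{<v})$ with constant coefficients, and in the upbeat case they are augmentations $\F(v)\to C^\bullet_{roos}(S_{>v},\F|_{S_{>v}})$, acyclic because evaluation at the minimum of $S_{>v}$ is an exact model for global sections there and the augmentation computes the restriction $\F(v)\to\F(b)$, an isomorphism by hypothesis. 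Your approach buys something real: the downbeat half uses only acyclicity of $\ord(S_{<v})$, not the existence of a maximum, so it already contains the paper's later generalization (Theorem \ref{downSetThrm}), which the paper proves separately in Section \ref{S5} by a closely related short exact sequence of Roos complexes with the skyscraper $\delta_s$ as quotient; and you correctly isolate that the sheaf condition $\F(v)\xrightarrow{\sim}\F(b)$ is needed only in the upbeat case. What the paper's resolution-based argument buys in exchange is that the induced map is manifestly the canonical one on derived-functor cohomology, whereas in your argument the identification of the cochain-level projection with the map induced by the inclusion $S'\hookrightarrow S$ rests on the (standard, but worth stating) naturality of the comparison between Roos and sheaf cohomology. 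The remaining gaps in your write-up are routine: fixing sign conventions for the incidence numbers, and replacing the spectral sequence by induction on the bounded filtration via the zig-zag lemma if one wants to stay elementary in a general abelian category $\V$.
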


\begin{definition}
    Sheaved space $(S, \F)$ is called \textit{core} if it has no beat elements. Sheaved space $(T, \G)$ is called \textit{core} of $(S, \F)$ if $(T, \G)$ is core and can be obtained from $(S, \F)$ by consecutive deletion of beats.
\end{definition}

\begin{theorem}
    All cores of $(S, \F)$ are isomorphic.
\end{theorem}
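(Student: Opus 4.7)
The plan is to prove this by strong induction on $|S|$ together with a local-confluence argument in the style of Newman's Lemma. If $(S,\F)$ has no beats it is its own core and there is nothing to show. Otherwise, let $(T_1,\G_1)$ and $(T_2,\G_2)$ be two cores reached by deletion sequences whose first removed beats are $v_1$ and $v_2$. If $v_1=v_2$, the induction hypothesis applied to $(S\setminus\{v_1\},\F|_{S\setminus\{v_1\}})$ gives $T_1\cong T_2$ immediately. So assume $v_1\neq v_2$; the entire problem then reduces to exhibiting a common sheaved space reachable by further beat deletions from both one-step reducts $(S\setminus\{v_1\},\F|_{S\setminus\{v_1\}})$ and $(S\setminus\{v_2\},\F|_{S\setminus\{v_2\}})$, at which point applying the induction hypothesis on each side yields $T_1\cong T_2$.

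The heart of the argument is therefore a local-confluence lemma, which I would prove by case analysis on the relative Hasse positions and beat-types of $v_1,v_2$. In every case except one, inspection of the Hasse diagram confirms that $v_1$ remains beat of the same type after deleting $v_2$ and symmetrically; the common target is then $(S\setminus\{v_1,v_2\},\F|_{S\setminus\{v_1,v_2\}})$. For example, if $v_1$ and $v_2$ are incomparable, none of the witness structures are affected; if $v_1<v_2$ but neither is the unique immediate predecessor/successor of the other, the relevant immediate neighborhoods are preserved by the removals; in the chain case where $v_1$ is upbeat with witness $v_2$ and $v_2$ is upbeat with witness $b_2\neq v_1$, composing isomorphic restrictions shows $v_1$ remains upbeat with witness $b_2$ after deleting $v_2$.

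The exceptional case is the \emph{paired} configuration in which $v_1$ is upbeat with unique witness $v_2$ and simultaneously $v_2$ is downbeat with unique witness $v_1$, since here each removal destroys the other's witness. In this case I would instead exhibit a direct isomorphism $(S\setminus\{v_1\},\F|_{S\setminus\{v_1\}})\cong(S\setminus\{v_2\},\F|_{S\setminus\{v_2\}})$. The underlying poset isomorphism is the identity on $W=S\setminus\{v_1,v_2\}$ sending $v_2\mapsto v_1$, with order-preservation following from the identifications $\{w<v_2\}\setminus\{v_1\}=\{w<v_1\}$ and $\{w>v_1\}\setminus\{v_2\}=\{w>v_2\}$, both of which are immediate consequences of the upbeat and downbeat conditions. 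For the sheaf isomorphism, I would use the identity on stalks over $W$ combined with the restriction isomorphism $\F(v_1)\to\F(v_2)$ supplied by the upbeat condition on $v_1$. The main technical step, where I expect most of the care to go, is checking that this assembles into a morphism in $\mathrm{SheavedSpaces}$: the naturality squares at the interface pairs $w<v_1$ and $v_2<q$ must commute, which reduces to functoriality of the restriction maps in $\F$ (the compositions $\F(w)\to\F(v_1)\to\F(v_2)$ and $\F(v_1)\to\F(v_2)\to\F(q)$ equal $\F(w)\to\F(v_2)$ and $\F(v_1)\to\F(q)$ respectively). Once local confluence is established, Newman's Lemma applied to the reduction ``one beat deletion, modulo isomorphism'' --- with termination guaranteed by the strict decrease of $|S|$ --- concludes the induction.
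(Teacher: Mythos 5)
Your proposal is correct and follows essentially the same route as the paper: a Newman's-lemma/local-confluence argument whose only nontrivial case is the paired configuration where $v_1$ is upbeat with witness $v_2$ and $v_2$ is downbeat with witness $v_1$, resolved by showing the two one-step reducts are isomorphic. You spell out more detail than the paper does (the explicit poset and sheaf isomorphism in the paired case, and the ``modulo isomorphism'' bookkeeping via induction on $|S|$), but the underlying argument is the same.
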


\begin{theorem} \label{result_main}
    Let $(T, \G)$ be a core of $(S, \F)$. The canonical inclusion $T \to S$ induces the natural isomorphism

    $$H^{\ast}(X_T, \G) \cong H^{\ast}(X_S, \F)$$
\end{theorem}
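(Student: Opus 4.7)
The plan is to reduce the statement to an iteration of the single-beat-removal theorem (the one stated immediately above), proceeding by induction on the length $n$ of the deletion sequence that produces $(T, \G)$ from $(S, \F)$. By the definition of a core there exists a chain
\begin{equation*}
(S, \F) = (S_0, \F_0), (S_1, \F_1), \dots, (S_n, \F_n) = (T, \G),
\end{equation*}
in which each $(S_{i+1}, \F_{i+1})$ arises from $(S_i, \F_i)$ by deleting a single beat element $v_i \in S_i$, with $\F_{i+1} = \F_i|_{S_{i+1}}$. Write $\iota_i \colon S_{i+1} \hookrightarrow S_i$ for the canonical inclusion; the canonical inclusion $T \hookrightarrow S$ then factors as $\iota_0 \circ \iota_1 \circ \cdots \circ \iota_{n-1}$.

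The base case $n = 0$ is trivial. For the inductive step I would invoke the single-beat theorem at stage $0$, which yields a natural isomorphism $\iota_0^{\ast} \colon H^{\ast}(X_{S_0}, \F_0) \xrightarrow{\cong} H^{\ast}(X_{S_1}, \F_1)$. Applying the inductive hypothesis to the length-$(n{-}1)$ chain beginning at $(S_1, \F_1)$ produces an isomorphism $H^{\ast}(X_{S_1}, \F_1) \cong H^{\ast}(X_{S_n}, \F_n)$ induced by the canonical inclusion $S_n \hookrightarrow S_1$. Composing the two and using functoriality of pullback along open embeddings of Alexandrov spaces delivers the claimed isomorphism, which, being a composite of pullbacks along the $\iota_i$, is the one induced by the canonical inclusion $T \hookrightarrow S$.

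The main obstacle is not combinatorial but book-keeping: at each stage $i$ one must check that $\F_{i+1}$, defined iteratively as the restriction of $\F_i$ to $S_{i+1}$, coincides with the global restriction $\F|_{S_{i+1}}$, so that the hypotheses of the single-beat theorem apply at every step and so that the iterated isomorphisms assemble into one induced by a single inclusion. This reduces to the formal fact that restriction of sheaves along inclusions of open subsets commutes with composition of such inclusions. Given this, no genuinely new argument is required beyond chaining the single-beat theorem, but the naturality clause must be tracked at each stage to justify the final claim about the isomorphism being induced by $T \hookrightarrow S$.
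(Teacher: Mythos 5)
Your proposal is correct and follows essentially the same route as the paper: both arguments take the deletion sequence guaranteed by the definition of a core, apply the single-beat theorem (Theorem \ref{cohom_inv}) at each stage, and compose the resulting isomorphisms, noting that the inclusion $T \hookrightarrow S$ factors as the composite of the stepwise inclusions. The only difference is presentational — you phrase the chaining as a formal induction and flag the compatibility of iterated restrictions, which the paper treats as implicit.
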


We also propose the generalization of the notion of beat vertices in this setting. In particular, we show that if a vertex of a sheaved space has an acyclic downset, then removing it preserves the cohomology.

\begin{theorem} \label{result_main_2}
    Let $(S, \F)$ be a sheaved space. Let be $s$ be such vertex that $H_{\star}(|S_{<s}|, \Z)$ are trivial (i.e., $|S_{<s}|$ has the homology of a point). Then

    $$H^{\ast}(X_S, \F) \cong H^{\ast}(X_{S'}, \F|_{S'})$$

    where $S' = S \setminus \{ s \}$.
\end{theorem}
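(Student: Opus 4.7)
The plan is to realize the cohomology on both sides as that of an explicit cochain complex indexed by chains, and then to show that the ``relative'' piece which disappears when we drop $s$ is acyclic. Recall that for a sheaf $\F$ on the Alexandrov space $X_S$, equivalently a covariant functor $F: S \to \V$, the sheaf cohomology $H^{\ast}(X_S, \F)$ is computed by the bar/nerve cochain complex
$$C^n(S, \F) = \prod_{s_0 < s_1 < \cdots < s_n} \F(s_n),$$
with the standard alternating-sum differential whose final face applies the restriction $F(s_n \leq s_{n+1})$. Let $D^{\ast} \subset C^{\ast}(S, \F)$ be the subcomplex of cochains that vanish on every chain avoiding $s$. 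This is a subcomplex because every face of a chain not through $s$ is again such a chain. The quotient is naturally $C^{\ast}(S', \F|_{S'})$: chains in $S'$ are exactly the chains of $S$ missing $s$, and the restricted sheaf has the same stalks. We thus have a short exact sequence $0 \to D^{\ast} \to C^{\ast}(S, \F) \to C^{\ast}(S', \F|_{S'}) \to 0$, and the theorem reduces to showing $H^{\ast}(D^{\ast}) = 0$.

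To analyze $D^{\ast}$, decompose every chain $\tau$ through $s$ uniquely as $\tau = \tau_{<} \sqcup \{s\} \sqcup \tau_{>}$, with $\tau_{<}$ a (possibly empty) chain in $S_{<s}$ and $\tau_{>}$ a (possibly empty) chain in $S_{>s}$. Setting $p = |\tau_{<}|$ and $q = |\tau_{>}|$, this realizes $D^{\ast}$ as the total complex of a bicomplex $(D^{p, q}, d_{<}, d_{>})$, where $d_{<}$ and $d_{>}$ correspond to inserting a new element into $\tau_{<}$ or $\tau_{>}$ respectively. The crucial observation is that the coefficient object $\F(\max \tau)$ is independent of $\tau_{<}$: it is $\F(\max \tau_{>})$ when $q \geq 1$ and $\F(s)$ when $q = 0$.

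I now take cohomology in the $d_{<}$ direction first. For fixed $\tau_{>}$ with coefficient $A := \F(\max \tau)$, the column $(D^{\ast, q}|_{\tau_{>}}, d_{<})$ is, up to a dimension shift, the augmented simplicial cochain complex of the order complex $|S_{<s}|$ with constant coefficients $A$: the $p = 0$ term (empty $\tau_{<}$) is a single copy of $A$ playing the role of the augmentation, while for $p \geq 1$ one gets $A$-valued cochains on the $(p-1)$-simplices (chains of length $p$) of $|S_{<s}|$. By hypothesis $|S_{<s}|$ has the integral homology of a point, so its augmented chain complex is a bounded complex of finitely generated free abelian groups with trivial homology, hence chain-contractible. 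Applying $\mathrm{Hom}(-, A)$ preserves chain contractibility, so the augmented cochain complex with coefficients in $A$ is acyclic. Taking the product over $\tau_{>}$ (and using that products of acyclic cochain complexes of modules are acyclic) shows that each column $D^{\ast, q}$ is $d_{<}$-acyclic.

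The spectral sequence of the bicomplex therefore collapses at $E_1 = 0$, giving $H^{\ast}(D^{\ast}) = 0$. The long exact sequence attached to $0 \to D^{\ast} \to C^{\ast}(S, \F) \to C^{\ast}(S', \F|_{S'}) \to 0$ then produces the claimed natural isomorphism induced by $S' \hookrightarrow S$. The main technical hurdle is correctly identifying the $p = 0$ term as the augmentation and carrying the attendant dimension shift; note that the hypothesis forces $|S_{<s}|$ to be nonempty (otherwise $H_0$ would vanish instead of being $\Z$), so the augmented complex is a genuine augmented complex of a nonempty space and the contractibility argument goes through.
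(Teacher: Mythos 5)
Your argument is correct, and it reaches the result by a genuinely different route than the paper's. Both proofs work in the Roos/order-complex cochain model and both reduce the theorem, via a short exact sequence of cochain complexes and the zig-zag lemma, to the acyclicity of a ``relative'' piece controlled by $|S_{<s}|$; but the relative pieces differ. The paper's third term is the Roos complex of the skyscraper sheaf $\delta_s$ at $s$, whose acyclicity is established in three steps: a dimension shift relating $\delta_W$ to the sheaf $\low{\bar{s}}{W}$ supported on $S_{<s}$ (Lemma \ref{lmPosetConst}), the identification of the latter's cohomology with $H^{*}(|S_{<s}|;\F(s))$ (Lemma \ref{downsetTrivial}), and a universal-coefficients argument (Lemma \ref{trivialConst}). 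Your relative piece is instead the subcomplex $D^{\bullet}$ of cochains supported on chains through $s$, which you kill directly via the join decomposition $\tau=\tau_{<}\sqcup\{s\}\sqcup\tau_{>}$: each $d_{<}$-column is the coaugmented constant-coefficient cochain complex of $\ord(S_{<s})$, acyclic because a bounded acyclic complex of free abelian groups is contractible and additive functors preserve contractibility. Your version buys several things: it avoids the auxiliary sheaves $\low{s}{W}$, $\low{\bar{s}}{W}$, $\delta_W$ altogether (in particular the appeal to injectivity of $\low{s}{W}$, which in Lemma \ref{lmPosetConst} needs $W$ injective --- a hypothesis not obviously satisfied by $W=\F(s)$); it makes completely transparent which side of the short exact sequence is the subcomplex (with the functions-on-chains convention, cochains supported on chains through $s$ form the subcomplex and $C^{\bullet}(S',\F|_{S'})$ the quotient, matching the map induced by $S'\hookrightarrow S$); and the contractibility argument works verbatim in any abelian category $\V$. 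The paper's version, in exchange, factors the work into separately reusable lemmas about skyscraper and downset sheaves. Your explicit remark that the hypothesis forces $S_{<s}\neq\emptyset$, so that the augmented complex is genuinely exact at the augmentation, addresses a necessary point the paper leaves implicit.
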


\begin{remark}
    Theorem \ref{result_main_2} also applies to constant sheaf on a poset which gives isomorphism of ordinary cohomology of $S$ and $S'$ (see Theorem \ref{homologyStong}).
\end{remark}

Implications of this result are straightforward as it gives instrument for speeding up calculations of cohomology of sheaves. For instance, in practice, only the 0th Betti number is often used due to the limitations in algorithmic efficiency. Our work has the potential to shift this paradigm.

Applied researchers typically do not work with general sheaves, instead restricting themselves to cellular sheaves, leaving the algorithmic aspects largely underexplored. This gap presents another key contribution of this paper: we aim to introduce a more general and formal setting into the applied domain, expanding the range of techniques and frameworks available for practical use. 

\subsection{Structure}

This paper is organized as follows. Section \ref{S2} contains all necessary preliminaries, definitions and constructions used. In Section \ref{S3} we show that deletion of one beat does not change cohomology of a sheaved space and, in Section \ref{S4}, we show that all cores are isomorphic and prove Theorem \ref{result_main}. In Section \ref{S5} we propose a generalization of beat collapses and discuss applications for constant sheaves.

\subsection{Acknowledgements} 
The author expresses his gratitude to his scientific advisor, A.~Ayzenberg, for his constant guidance and support throughout the development of this paper, and to I.~Spiridonov for useful discussions.

\section{Preliminaries} \label{S2}

\subsection{Partially ordered sets and sheaves.}

Let $S$ be a finite partially ordered set.

\begin{definition}
    Let $T,S$ be partially ordered sets. Mapping $f: S \to T$ is called \textit{morphism} if for any elements $s_1, s_2 \in S$ such that $s_1 \leq s_2$ holds that $f(s_1) \leq f(s_2)$.
\end{definition}

Let $\V$ be an abelian category, for example $\Abel$ or $\ko \Vect$.

\begin{definition}
    \textit{Presheaf} on topological space $X = (M, \Omega_X)$ with values in $\V$ is a contravariant functor $\F: \cat(\Omega_X \setminus \{ \varnothing \})^{op} \to \V$
\end{definition}
    
\begin{definition}
    Presheaf $\F$ is called \textit{sheaf} if it satisfies following axioms:

    $\bullet$ \textit{Locality}: Suppose $U$ is an open set, $\{ U_i \}_{i \in I}$ is an open cover of $U$ with $U_i \subseteq U$ for all $i \in I$, and $s, t \in \F(U)$ are sections. If $s|_{U_i} = t|_{U_i}$ for all $i \in I$, then $s = t$.

    $\bullet$ \textit{Gluing}: Suppose \( U \) is an open set, \( \{U_{i}\}_{i\in I} \) is an open cover of \( U \) with \( U_{i} \subseteq U \) for all \( i \in I \), and \( \{s_{i} \in \F(U_{i})\}_{i \in I} \) is a family of sections. If all pairs of sections agree on the overlap of their domains, that is, if \( s_{i}|_{U_{i}\cap U_{j}} = s_{j}|_{U_{i}\cap U_{j}} \) for all \( i, j \in I \), then there exists a section \( s \in \F(U) \) such that \( s|_{U_{i}} = s_{i} \) for all \( i \in I \).

\end{definition}

\begin{remark}
    Presheaves on topological space $X$ form a category $\mathrm{PreShvs}(X, \V)$ where morphisms are defined as natural transformations of functors. Sheaves form a complete subcategory $\mathrm{Shvs}(X, \V)$ in $\mathrm{PreShvs}(X, \V)$.
\end{remark}

\begin{definition}
    Diagram on a partially ordered set $S$ with values in category $V$ is functor $D: \cat(S) \to V$
\end{definition}

Category $\mathrm{Diag}(S, \V)$ of diagrams on partially ordered set $S$ is equivalent to category $\mathrm{Shvs}(X_S, \V)$ of sheaves on Alexandrov's topological space. \cite[Theorem 4.2.10.]{curry}

\subsection{Morphisms of sheaves.}

\begin{definition} \label{imF}
    Let $f: S \to T$ be a morphism of partially ordered sets $S, T$. Let $\G \in \mathrm{Shvs(X_T, \V}$ be a sheaf defined on $X_T$. Then \textit{pullback} of $\G$ is defined as following presheaf $f^\ast \G$ on $S$

    $$f^\ast \G(U) = G(f(U)), \quad f^{\ast}\G(U \supseteq V) = \G(f(U) \supseteq f(V))$$
\end{definition}

\begin{fact}
    $f^{\ast} \G$ defined in \ref{imF} is a sheaf on $X_S$ and mapping $f^{\ast}$ is functorial.
\end{fact}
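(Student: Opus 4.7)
The plan is to exploit the equivalence of categories $\mathrm{Shvs}(X_P, \V) \simeq \mathrm{Diag}(P, \V)$ for a finite poset $P$, stated just above. Under this equivalence, a sheaf on $X_P$ is identified with a covariant functor $D \colon \cat(P) \to \V$ sending $p \in P$ to the stalk $\F(U_p)$, where $U_p = \{q \in P : q \geq p\}$ is the minimal open neighborhood of $p$, and sending $p \leq p'$ to the restriction $\F(U_p) \to \F(U_{p'})$. The strategy is to produce a functor $\cat(S) \to \V$ whose associated sheaf is $f^* \G$; this both shows that $f^* \G$ is a sheaf and makes functoriality of the operation $f^*$ automatic from functoriality of composition of functors.

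The natural candidate is the composition $D_\G \circ f \colon \cat(S) \to \V$, where $D_\G$ is the functor corresponding to $\G$ and $f$ is viewed as a functor between the categories associated to the posets $S$ and $T$. To identify the associated sheaf with $f^* \G$ as defined, I would verify that the stalks and restrictions match: at $s \in S$, $(f^*\G)(U_s) = \G(f(U_s)) = \G(U_{f(s)}) = D_\G(f(s))$, where the middle equality replaces $f(U_s)$ by its upper closure $U_{f(s)}$ in $T$. Restriction maps on arbitrary opens $U \supseteq V$ in $X_S$ then match by naturality, since each side computes the appropriate limit over the stalks indexed by elements of $U$.

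Once this identification is in place, functoriality of $f^*$ is formal. For a second poset morphism $g \colon T \to W$ and a sheaf $\HH$ on $X_W$, the identities $(g \circ f)^* \HH = f^*(g^* \HH)$ and $\id^* \G = \G$ read off directly from the definition. For a morphism $\phi \colon \G \to \G'$ of sheaves, corresponding to a natural transformation $\eta \colon D_\G \Rightarrow D_{\G'}$, the pullback $f^* \phi$ corresponds to the whiskering $\eta f \colon D_\G \circ f \Rightarrow D_{\G'} \circ f$; preservation of identities and composition is then immediate.

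The main subtlety, where care is genuinely needed, is interpreting $\G(f(U))$ when $f(U)$ is not itself an upper set in $T$ and therefore not open in $X_T$. The natural resolution is to read $f(U)$ as its upper closure in $T$, consistently with the standard definition of the inverse image of a sheaf. This issue is bypassed entirely in the categorical picture, where $f^* \G$ is simply the functor $D_\G \circ f$, so no set-theoretic matching of subsets of $T$ is required.
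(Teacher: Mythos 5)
The paper states this Fact without any proof, so there is nothing to compare against directly; your route through the equivalence $\mathrm{Shvs}(X_P,\V)\simeq\mathrm{Diag}(P,\V)$ is the natural one, and it matches how the paper actually uses $f^{\ast}$ later (for instance, in Lemma \ref{global_down} the restricted sheaf is handled as the restricted diagram, with $\Gamma(S',\F|_{S'})$ computed inside $\prod_{t\in S'}\F(t)$ rather than as $\F$ of an upper closure). Your stalk identification $(f^{\ast}\G)(U_s)=\G(U_{f(s)})=D_{\G}(f(s))$ and the functoriality argument via composition of functors and whiskering of natural transformations are correct.

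The gap is the sentence asserting that on a general open $U$ the two sides ``match by naturality, since each side computes the appropriate limit over the stalks indexed by elements of $U$.'' Even after replacing $f(U)$ by its upper closure $\overline{f(U)}$, the paper's formula computes $\G(\overline{f(U)})=\lim_{t\in\overline{f(U)}}\G(U_t)$, a limit indexed by $\overline{f(U)}$, whereas the sheaf associated to $D_{\G}\circ f$ computes $\lim_{s\in U}\G(U_{f(s)})$, a limit indexed by $U$; these need not agree. Concretely, let $T=\{x,y,z\}$ with $x<z$ and $y<z$, let $S=\{x,y\}$ with the induced (discrete) order, $f$ the inclusion, and $U=S$. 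Then $\G(\overline{f(U)})=\G(X_T)=\G(x)\times_{\G(z)}\G(y)$, while $\lim_{s\in U}\G(U_{f(s)})=\G(x)\times\G(y)$; taking $\G(x)=\G(y)=\G(z)=\ko$ with identity restrictions these are $\ko$ and $\ko^2$. The same example shows that $U\mapsto\G(\overline{f(U)})$ fails the gluing axiom for the cover of $U$ by the two singletons, so the Fact is actually \emph{false} for the literal formula of Definition \ref{imF}. The repair is to take the conclusion of your categorical picture as the definition: $f^{\ast}\G$ is the sheaf corresponding to $D_{\G}\circ f$ (equivalently, the sheafification of the paper's presheaf), which agrees with the paper's formula on the minimal opens $U_s$ and hence changes nothing in the rest of the paper; with that definition the remainder of your argument goes through verbatim.
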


If $f: S \hookrightarrow T$ is exact embedding it is natural to denote $f^\ast \G$ as $\G|_{S}$.

\subsection{Sheaf cohomology.}

\begin{definition}
    \textit{Functor of global sections} $\Gamma$ is a functor from $\mathrm{Shvs}(X_S, \V)$ to category $\V$ such that

    $$\Gamma: \F \to \F(X_S) = \lim_{\xleftarrow{s \in S}} \F(s)$$
\end{definition}

\begin{construction} \label{sWConstr}
    Let $s$ an element of finite poset $S$, $W$ be an from category $\V$. Denote by $\lceil s \rceil_W$ following sheaf over $S$

    $$\lceil s \rceil_W(t) = \begin{cases}
        W, & \text{if $t \leq s$}\\
        0, & \text{otherwise.}
    \end{cases}$$
\end{construction}

\begin{fact} \label{inj_shv}
    If $W$ is injective object in $\V$ then $\lceil s \rceil_W$ is injective in $\mathrm{Shvs}(S)$. \cite[Lemma 7.1.15]{curry}
\end{fact}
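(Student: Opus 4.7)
The plan is to identify $\lceil s \rceil_W$ as the image of $W$ under the right adjoint to the evaluation functor $\mathrm{ev}_s : \mathrm{Shvs}(X_S, \V) \to \V$, $\F \mapsto \F(s)$, and then invoke the standard fact that a right adjoint of an exact functor preserves injective objects.

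The first step is to establish a natural isomorphism
$$\mathrm{Hom}_{\mathrm{Shvs}(X_S, \V)}(\F, \lceil s \rceil_W) \;\cong\; \mathrm{Hom}_{\V}(\F(s), W)$$
for every sheaf $\F$. In one direction, a morphism $\eta : \F \to \lceil s \rceil_W$ is sent to its component $\eta_s : \F(s) \to W$. In the other direction, a map $\phi : \F(s) \to W$ is extended to the natural transformation whose component at $t \leq s$ is $\phi$ precomposed with the structure map $\F(t) \to \F(s)$, and whose component at $t \not\leq s$ is zero. Compatibility with the maps of the diagram is immediate from Construction \ref{sWConstr}: between two elements of the downset of $s$ the structure map of $\lceil s \rceil_W$ is the identity of $W$, while any transition leaving the downset is the zero map. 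The bijectivity follows since, by naturality along the relations $t \leq s$, every morphism into $\lceil s \rceil_W$ is already determined by its component at $s$.

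The second step is the observation that $\mathrm{ev}_s$ is exact. Via the equivalence $\mathrm{Shvs}(X_S, \V) \simeq \mathrm{Diag}(S, \V)$, kernels and cokernels are computed pointwise, so evaluation at a single element preserves them both.

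Putting the pieces together, the functor
$$\mathrm{Hom}_{\mathrm{Shvs}(X_S, \V)}(-,\, \lceil s \rceil_W) \;\cong\; \mathrm{Hom}_{\V}(\mathrm{ev}_s(-),\, W)$$
is the composition of the exact functor $\mathrm{ev}_s$ with $\mathrm{Hom}_{\V}(-, W)$, the latter being exact precisely because $W$ is injective in $\V$. Exactness of $\mathrm{Hom}_{\mathrm{Shvs}(X_S, \V)}(-, \lceil s \rceil_W)$ is by definition the injectivity of $\lceil s \rceil_W$. The only genuinely delicate step is the verification of the Hom-adjunction, but the particularly simple form of $\lceil s \rceil_W$ on the downset of $s$ reduces this to a straightforward inspection of naturality squares.
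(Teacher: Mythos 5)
Your argument is correct: the natural isomorphism $\mathrm{Hom}_{\mathrm{Shvs}(X_S,\V)}(\F,\lceil s\rceil_W)\cong\mathrm{Hom}_{\V}(\F(s),W)$ does hold (every morphism into $\lceil s\rceil_W$ is determined by its component at $s$ via naturality along $t\leq s$, exactly as you say), evaluation at $s$ is exact because (co)kernels in the diagram category are computed pointwise, and composing with $\mathrm{Hom}_{\V}(-,W)$ for injective $W$ yields an exact functor, which is injectivity of $\lceil s\rceil_W$. The paper gives no proof of its own here — it states this as a Fact and cites Curry — and your adjunction argument (realizing $\lceil s\rceil_W$ as the value of the right adjoint of $\mathrm{ev}_s$ on $W$) is precisely the standard proof in that reference, so there is nothing to compare beyond noting that your write-up supplies the details the paper omits.
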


\begin{construction} \label{shvMon}
    Consider the following morphism of sheaves

    $$A: \F \to \bigoplus_{s \in S} \lceil s \rceil_{M_s} $$
    where $M_s$ is some injective module in $\V$ with monomorphism $i_s: \F(s) \to M_s$ and $A = \oplus_{s \in S} A_s$ where $A_s$ is formed naturally from $i_s$.
\end{construction}

\begin{fact}
    Morphism constructed in \ref{shvMon} is monomorphism, i.e. $\bigoplus_{s \in S} \lceil s \rceil_{M_s}$ is injective object in $\mathrm{Shvs}$. \cite[Lemma 7.1.15]{curry}
\end{fact}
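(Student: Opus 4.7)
The statement has two independent parts, which I would dispatch separately.

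First I would tackle the injectivity of the codomain. By Fact \ref{inj_shv} each summand $\lceil s \rceil_{M_s}$ is an injective object in $\mathrm{Shvs}$, since each $M_s$ is injective in $\V$ by construction. As $S$ is finite, $\bigoplus_{s \in S} \lceil s \rceil_{M_s}$ is a finite direct sum of injectives, and in any abelian category a finite biproduct of injective objects remains injective (the lifting property extends coordinatewise through the product).

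Next I would show that $A$ is a monomorphism. I would pass through the equivalence $\mathrm{Shvs}(X_S, \V) \simeq \mathrm{Diag}(S, \V)$, in which kernels (and hence the property of being a monomorphism) are computed pointwise. It therefore suffices to check that $A(t)\colon \F(t) \to \bigoplus_{s \in S} \lceil s \rceil_{M_s}(t)$ is injective for every $t \in S$. Using the definition of $\lceil s \rceil_{M_s}$ from Construction \ref{sWConstr}, this target simplifies to
$$A(t)\colon \F(t) \longrightarrow \bigoplus_{s \geq t} M_s,$$
and unwinding the ``natural'' formation of $A_s$ from $i_s$ shows that the coordinate at $s = t$ is precisely $i_t\colon \F(t) \to M_t$. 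Since $i_t$ is a monomorphism by hypothesis, projecting $A(t)$ onto that coordinate already detects injectivity of $A(t)$.

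I do not foresee a genuine obstacle: the argument is essentially a bookkeeping exercise once one unwinds Construction \ref{shvMon}. The only minor subtlety is verifying that $A_s(s) = i_s$ under the prescribed natural construction, which follows immediately from the fact that the restriction map $\lceil s \rceil_{M_s}(s \leq s)$ is the identity on $M_s$, forcing naturality of $A_s$ at $s$ to be exactly $i_s$.
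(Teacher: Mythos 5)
Your proof is correct. The paper does not actually prove this fact itself --- it only cites Curry's Lemma 7.1.15 --- and your argument (monomorphisms in $\mathrm{Diag}(S,\V)\simeq\mathrm{Shvs}(X_S,\V)$ are detected pointwise, the target at $t$ is $\bigoplus_{s\geq t}M_s$ with the $s=t$ coordinate of $A(t)$ equal to the monomorphism $i_t$, and a finite product of injectives is injective) is precisely the standard verification that the citation stands in for. One cosmetic quibble: the identity $A_t(t)=i_t$ is really part of the \emph{definition} of ``formed naturally from $i_t$'' (namely $A_t(u)=i_t\circ\mathrm{res}^u_t$ for $u\leq t$ and $0$ otherwise), rather than something forced by naturality together with the identity restriction map, but this does not affect the correctness of the argument.
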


\begin{definition} \label{godeman}
    \textit{Injective Godement's resolution} of sheaf $\F$ is defined as following exact sequence

    $$0 \xrightarrow{} \F \xrightarrow{i_0} \I^{0} \xrightarrow{i_1} \I^{1} \xrightarrow{i_2} \I^{2} \xrightarrow{i_3} \ldots $$

        where $\I^0 = \bigoplus_{s \in S} \lceil s \rceil_{M_s}$ and $i_0$ is taken from construction \ref{shvMon} and $\I^k, i_k$ follows by applying construction \ref{shvMon} to $\mathrm{coker} \ i_{k - 1}$.
\end{definition}

\begin{definition}
    \textit{Cohomology} $H^{\star}(S, \F)$ of sheaf $\F$ are defined as derived functors taken from functor of global sections $\F$. In other words, consider following differential complex

    $$0 \xrightarrow{} \I^0(X_S) \xrightarrow{i_1(S)} \I^1(X_S) \xrightarrow{i_2(S)} \I^2(X_S) \xrightarrow{i_3(S)} \ldots ,$$

    where $\I^j(X_S)$ are global sections. Then

    $$H^j(X_S, \F) = \ker i_{j + 1}(S) / \mathrm{im} \ i_j(S)$$
\end{definition}

There is also another approach to calculating cohomology of a poset embedded with a sheaf: constructing simplicial complex with local coefficient system and then calculating simplicial cohomology. In following sections we introduce necessary definitions and theorems describing link between regular and algorithmic approaches to calculating cohomology.

\subsection{(Co)chain complexes}

Recall that by $\V$ we denote an abelian category, for example $\Abel$ or $\ko\Vect$.

The sequence
\begin{equation}\label{eqCochainCpx}
0\to C^0\stackrel{d_0}{\to} C^1 \stackrel{d_1}{\to} C^2 \stackrel{d_2}{\to}\cdots
\end{equation}
of objects of $\V$ and morphisms between them is called a cochain complex if $d_j;d_{j+1}=0$ for any $j=0,1,\ldots$. For short, a chain complex~\eqref{eqCochainCpx} is denoted $(C^\bullet,d)$.

The definition implies the natural inclusion $p_j\colon \im d_j\hookrightarrow \ker d_{j+1}$.

\begin{definition}\label{definCohomologyCochain}
The cohomology of the cochain complex $(C^\bullet,d)$ are defined by
\[
H^j(C^\bullet,d)=\ker d_{j+1}/\im d_j = \Coker p_j \mbox{ for }j=0,1,\ldots.
\]
\end{definition}

A cochain complex $(C^\bullet,d)$ --- or any sequence of the form~\eqref{eqCochainCpx} --- is called exact (resp. exact at position $i$), if $H^j(C^\bullet,d)=0$ for any $j$ (resp. for $j=i$). Equivalently, $\im d_j=\ker d_{j+1}$.

\begin{lemma}[Zig-zag lemma]\label{lemZigZag}
A short exact sequence $0\to (C^\bullet,d_C)\to (B^\bullet,d_B)\to (A^\bullet,d_A)\to 0$ in $\Cochain(\V)$ induces the long exact sequence of cohomology
\begin{multline}\label{eqLongExactCochain}
    0\to H^0(C^\bullet,d_C)\to H^0(B^\bullet,d_B)\to H^0(A^\bullet,d_A)\to\\
    \to H^1(C^\bullet,d_C)\to H^1(B^\bullet,d_B)\to H^1(A^\bullet,d_A)\to\\
    \to H^2(C^\bullet,d_C)\to H^2(B^\bullet,d_B)\to H^2(A^\bullet,d_A)\to \cdots
\end{multline}
\end{lemma}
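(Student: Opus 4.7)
The plan is to construct the connecting homomorphisms $\delta^j \colon H^j(A^\bullet, d_A) \to H^{j+1}(C^\bullet, d_C)$ and then verify exactness of the resulting long sequence at each of the three recurring positions. This is the standard snake-lemma argument carried out inductively in degree.

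Write $\alpha^j \colon C^j \to B^j$ and $\beta^j \colon B^j \to A^j$ for the degree-$j$ components of the short exact sequence. To define $\delta^j$, I would take a cocycle $a \in A^j$, lift it to $b \in B^j$ with $\beta^j(b) = a$, and note that $\beta^{j+1}(d_B b) = d_A(a) = 0$, so by exactness $d_B b$ is the image under $\alpha^{j+1}$ of a unique $c \in C^{j+1}$. Monomorphy of $\alpha^{j+2}$ together with $d_B d_B = 0$ forces $d_C c = 0$, and I set $\delta^j [a] := [c]$. A short diagram chase shows that modifying $b$ by an element in the image of $\alpha^j$, or replacing $a$ by $a + d_A a'$, alters $c$ only by a $d_C$-coboundary, so $\delta^j$ is well-defined on cohomology classes; its additivity is immediate from the construction.

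Exactness then reduces to six local checks at each degree --- at $H^j(C^\bullet)$, $H^j(B^\bullet)$, $H^j(A^\bullet)$, and at $H^{j+1}(C^\bullet)$ where $\delta^j$ lands --- each a routine diagram chase. For instance, at $H^j(A^\bullet)$: any class coming from $H^j(B^\bullet)$ already admits a cocycle lift, whence $\delta^j$ annihilates it; conversely, if $\delta^j[a] = 0$ then $c = d_C c'$, and $b - \alpha^j(c')$ is then a cocycle lift of $a$. The checks at the other positions have the same flavor, each using exactness of the short sequence at precisely one spot and the cocycle condition.

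The main obstacle is not the argument itself but its formalization in a general abelian category $\V$, where element chasing is not a priori justified. The cleanest remedy is to invoke the Freyd--Mitchell embedding theorem and reduce to a module category where every step above is literal; alternatively, each lift and uniqueness statement can be rephrased categorically via kernels, cokernels, pullbacks, and pushouts, which is essentially the content of the snake lemma proper. Since the statement is entirely classical and is used here only as a homological tool in the sequel, a full proof would spell out the construction of $\delta^j$ and its well-definedness, and then defer the six exactness verifications to a standard homological algebra source.
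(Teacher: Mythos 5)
Your argument is the standard construction of the connecting homomorphism followed by the routine exactness checks, and it is correct, including your observation that element chasing in a general abelian category $\V$ must be justified via the Freyd--Mitchell embedding or a purely arrow-theoretic reformulation. The paper states this lemma as a classical preliminary and gives no proof at all, so there is nothing to compare against; your treatment is, if anything, more careful than what the paper requires.
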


\subsection{Simplicial approach to cohomology}

Let $K$ be a finite simplicial complex. Local (cohomological) coefficient system $\F$ on $K$ is defined as functor from $\cat(K)$ to $\V$. That is, local coefficient system assigns to each non-empty simplex $I\in K$ an object of category $\V$, and to each inclusion $J\subset I$ a homomorphism $\F(J\subset I)\colon \F(J)\to \F(I)$ so that all possible diagrams commute. 

Define the cochain complex with coefficients in the local system $\F$ on $K$:

\begin{equation}\label{eqComplexLocalCoeffs}
0\to C^0(K;\F)\to C^1(K;\F)\to C^2(K;\F)\to\cdots
\end{equation}
where
\begin{itemize}
  \item $C^j(K;\F)=\bigoplus_{J\in K,\dim J=j}\F(J)$;
  \item Differential $d\colon C^j(K;\F)\to C^{j+1}(K;\F)$ is defined by the formula
  \[
  d=\bigoplus_{\substack{I\subset J\\\dim J=\dim I+1}}\inc{I}{J}\F(I\subset J).
  \]
\end{itemize}

\begin{definition}
    
The cohomology of the complex \eqref{eqComplexLocalCoeffs} is called the cohomology of the simplicial complex $K$ with cofficients in the system $\F$. We will denote it by $\H_{al}^*(K;R)$:
\[
\H_{al}^j(K;R)=\ker(d\colon C^j(K;\F)\to C^{j+1}(K;\F))/\Im(d\colon C^{j-1}(K;\F)\to C^{j}(K;\F)).
\]
\end{definition}

Above construction allows us to compute cohomology of a sheaved space in two different ways. Let us describe the connection between sheaf and algorithmic cohomology. The following theorem is taken from ~\cite[Thm.7.3.2]{curry}, where a dual version for homology and cosheaves is proven.

\begin{theorem}\label{thmSheafAlgorithmicSimpComp}
Let $K$ be a simplicial complex, $\F$ be a sheaf (or cohomological local coefficient system) on $K$. Then there is a natural isomorphism $H^*(K;\F)\cong H_{al}^*(K;\F)$.
\end{theorem}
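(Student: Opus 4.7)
The plan is to realise the algorithmic complex $C^\bullet(K;\F)$ as a Čech complex for a particularly simple cover of $X_K$ and then apply Leray's theorem. Identify $K$ with its face poset and work in the Alexandrov space $X_K$. Consider the cover $\U=\{U_v\}_{v\in V(K)}$, where $U_v=\{\sigma\in K:v\in\sigma\}$ is the open star of the vertex $v$. Every simplex contains a vertex, so $\U$ is indeed a cover; moreover, the intersection $U_{v_0}\cap\cdots\cap U_{v_j}$ equals the set of simplices containing every $v_i$, which is empty when $\{v_0,\ldots,v_j\}$ is not a simplex of $K$ and otherwise is the upper set whose unique minimum is the simplex $\tau=\{v_0,\ldots,v_j\}$.

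First I would match the Čech complex $\check C^\bullet(\U;\F)$ with $C^\bullet(K;\F)$ term-by-term. For any upper set $U\subseteq X_K$ possessing a minimum element $\tau$, the Alexandrov sheaf axioms yield $\F(U)=\F(\tau)$, since $U$ is the minimal open neighbourhood of $\tau$. Therefore
\[
\check C^j(\U;\F)=\bigoplus_{\substack{v_0<\cdots<v_j\\ \{v_0,\ldots,v_j\}\in K}}\F(\{v_0,\ldots,v_j\})=\bigoplus_{\dim\sigma=j}\F(\sigma)=C^j(K;\F).
\]
The Čech coboundary, built from the restrictions $\F(U_{v_0\cdots\widehat{v_i}\cdots v_j})\to\F(U_{v_0\cdots v_j})$, corresponds under this identification to the simplicial restriction $\F(\{v_0,\ldots,\widehat{v_i},\ldots,v_j\}\subset\{v_0,\ldots,v_j\})$ weighted by the alternating sign $\inc{I}{J}$; hence $\check H^*(\U;\F)\cong\H_{al}^*(K;\F)$.

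Next I would establish that $\U$ is an $\F$-acyclic cover. Every nonempty intersection $U$ in $\U$ is an upper set with a minimum element $\tau$, so the global-sections functor on $U$ coincides with the stalk functor $\G\mapsto\G(\tau)$, which is exact. Applying this to the injective Godement resolution $\F\to\I^\bullet$ from Definition \ref{godeman} shows that $0\to\F(U)\to\I^0(U)\to\I^1(U)\to\cdots$ is exact, i.e. $H^j(U;\F|_U)=0$ for every $j\geq 1$.

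Finally, Leray's theorem yields $H^*(X_K,\F)\cong\check H^*(\U;\F)$, and combined with the identification above this gives the required natural isomorphism $H^*(X_K,\F)\cong\H_{al}^*(K;\F)$. Naturality in $\F$ is automatic, since every construction used is functorial in $\F$. The one step requiring real care is the sign-matching between the Čech coboundary and the combinatorial differential $\bigoplus_{I\subset J}\inc{I}{J}\F(I\subset J)$; once the vertex set is linearly ordered, the bijection between strictly ordered tuples $(v_0,\ldots,v_j)$ and oriented $j$-simplices makes the two signs coincide.
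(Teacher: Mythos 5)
The paper does not prove this statement at all --- it is imported verbatim from Curry's thesis (Thm.~7.3.2 there, stated dually for cosheaf homology), so your argument is a genuinely independent, self-contained route. It is also essentially correct: the open-star cover $\U=\{U_v\}$ of the Alexandrov space of the face poset is exactly the classical ``good cover'' for this comparison, the identification of nonempty intersections $U_{v_0}\cap\cdots\cap U_{v_j}$ with the minimal open neighbourhood of the simplex $\{v_0,\ldots,v_j\}$ is right, and the term-by-term and sign matching between the alternating \v{C}ech complex and $C^\bullet(K;\F)$ is the standard one. Two points deserve slightly more care than you give them. First, in the acyclicity step the clean argument is not to restrict the global Godement resolution but to observe that $\Gamma(U_\tau,-)$ \emph{is} the stalk functor at the minimum $\tau$, hence exact, hence all of its higher right derived functors vanish for \emph{every} sheaf on $U_\tau$; as written, you apply $\Gamma(U,-)$ to a resolution that is injective in $\mathrm{Shvs}(X_K)$ rather than in $\mathrm{Shvs}(U)$, and you should either note that restriction to an open set preserves injectivity or just invoke exactness of the derived functor directly. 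Second, Leray's theorem is being used for sheaves valued in an arbitrary abelian category $\V$; the usual proof (the \v{C}ech-to-derived-functor spectral sequence, plus the fact that injective sheaves are \v{C}ech-acyclic) does go through in this finite setting given that $\V$ has enough injectives, which the paper already assumes, but you should say so rather than treat Leray as a black box. Compared with citing Curry, your proof buys a short, explicitly computable isomorphism at the cost of invoking the \v{C}ech machinery; both are legitimate, and yours has the advantage of making the naturality in $\F$ visibly tautological.
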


\begin{construction} \label{rossConst}
Let $\sigma=(s_0<s_1<\cdots<s_k)$ be a chain in $S$, i.e., a simplex in $\ord(S)$. Denote by $\sigma_{\max}$ the element $s_k$. If $\sigma\subseteq\tau$ is a set-theoretic inclusion of chains, then clearly $\sigma_{\max}\leqslant \tau_{\max}$. For each sheaf $\F$ on $S$, we define a local coefficient system $\widehat{\F}$ on $\ord S$ by the formula
\[
\widehat{\F}(\sigma)=\F(\sigma_{\max}),\quad \widehat{\F}(\sigma\subseteq\tau)=\F(\sigma_{\max}\leqslant \tau_{\max}).
\]
\end{construction}

\begin{definition}
    The cochain complex $C_{roos}(S;F)$ is defined as $C_{al}(\ord(S); \hat{F})$. Its cohomology is denoted by $H_{roos}^*(S;F)$.
\end{definition}

\begin{remark}
    The complex $C_{roos}$ was first defined in the work of J. Roos \cite{roos}, hence the name.
\end{remark}

\begin{theorem}\label{thmGeneralSheafToLocCoeffSyst}
The sheaf cohomology $H^*(S;\F)$ are isomorphic to the cohomology of Roos complex $H_{roos}^*(S; F)$.
\end{theorem}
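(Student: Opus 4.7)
The plan is to compare the two cohomology theories by constructing a first-quadrant double complex built from the Godement resolution and the Roos construction, then running its two spectral sequences. Concretely, starting from the injective resolution $0 \to \F \to \I^0 \to \I^1 \to \cdots$ of Definition \ref{godeman} --- in which each $\I^j$ is a direct sum of elementary sheaves $\lceil t \rceil_M$ from Construction \ref{sWConstr} --- I would form
\[
E^{i,j} \;=\; C^i_{roos}(S;\I^j) \;=\; \bigoplus_{s_0 < s_1 < \cdots < s_i} \I^j(s_i),
\]
with horizontal Roos differential and vertical Godement differential.

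First I would run the spectral sequence that takes vertical cohomology first. On the finite Alexandrov space $X_S$ the stalk of a sheaf at $s_i$ coincides with its value $\F(U_{s_i}) = \F(s_i)$, so evaluation at a point is an exact functor on $\mathrm{Shvs}(X_S, \V)$. Hence $C^i_{roos}(S;-)$, being a finite direct sum of such evaluations, is also exact, and each column $0 \to \F(s_i) \to \I^\bullet(s_i)$ remains exact. The vertical cohomology of $E^{*,*}$ is therefore concentrated in row $0$ with value $C^i_{roos}(S;\F)$, and the spectral sequence collapses at the $E_2$ page to $H^*_{roos}(S;\F)$.

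Next I would run the spectral sequence that takes horizontal cohomology first, which reduces to computing $H^*_{roos}(S; \lceil t \rceil_M)$ for the injective building blocks. Unwinding the definitions,
\[
C^i_{roos}(S;\lceil t \rceil_M) \;=\; \bigoplus_{\sigma_{\max} \leq t} M \;=\; C^i(\ord(S_{\leq t}); M),
\]
the simplicial cochain complex of the order complex of the downset $S_{\leq t}$ with constant coefficients $M$. Because $t$ is the maximum of $S_{\leq t}$, this order complex is a simplicial cone with apex $\{t\}$ over $\ord(S_{<t})$ (or a single point if $S_{<t} = \emptyset$), hence contractible; its cohomology with constant coefficients equals $M$ in degree $0$ and vanishes otherwise. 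A direct computation of the limit gives $\Gamma(\lceil t \rceil_M) = M$ as well, matching the degree-$0$ value. Passing to direct sums, each $\I^j$ is Roos-acyclic in positive degrees with $H^0_{roos}(S;\I^j) = \Gamma(\I^j)$, so this spectral sequence collapses at $E_2$ to $H^j(\Gamma(\I^\bullet)) = H^j(S;\F)$.

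Since $E^{*,*}$ lies in the first quadrant, both spectral sequences converge to the total cohomology of the double complex, yielding the natural isomorphism $H^*(S;\F) \cong H^*_{roos}(S;\F)$. The main technical obstacle is the contractibility computation underlying the Roos-acyclicity of $\lceil t \rceil_M$: one has to recognise its Roos complex as simplicial cochains on $\ord(S_{\leq t})$ and then use that a poset with a top element has a contractible order complex.
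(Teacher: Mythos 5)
Your argument is correct, and it is worth noting that the paper itself supplies no proof of this theorem at all --- it simply defers to Curry's thesis --- so your double-complex argument is a genuine, self-contained substitute rather than a variant of something in the text. The two key points you need both check out: (i) on a finite Alexandrov space the stalk at $s$ is $\F(U_s)=\F(s)$, so evaluation at a point is exact and $C^i_{roos}(S;-)$ carries the Godement resolution to exact columns, giving $H^*_{roos}(S;\F)$ from the first spectral sequence; and (ii) for the building blocks $\lceil t\rceil_M$ the Roos complex is literally the simplicial cochain complex of $\ord(S_{\leq t})$ with constant coefficients $M$ (chains not contained in $S_{\leq t}$ contribute zero summands and zero differential components), and $\ord(S_{\leq t})$ is a cone on the apex $t$, so these sheaves are Roos-acyclic with $H^0_{roos}=M=\Gamma(\lceil t\rceil_M)$; this is exactly the acyclicity statement that makes the second spectral sequence degenerate to $H^*(S;\F)$. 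Two small points you should make explicit in a polished write-up: the squares of $C^i_{roos}(S;\I^j)$ commute because the Roos construction is functorial in the sheaf (insert the usual sign to get a genuine double complex), and the identification $H^0_{roos}(S;\G)\cong\Gamma(\G)$ for any sheaf $\G$ is what guarantees that the induced differential on the bottom row of the second $E_1$ page is the differential of $\Gamma(\I^\bullet)$. Since the double complex is first-quadrant and horizontally bounded (chains in a finite poset have bounded length), convergence is automatic, and naturality follows from the functoriality of the Godement resolution. An equivalent, spectral-sequence-free packaging of the same idea would be to observe that $H^*_{roos}(S;-)$ is a $\delta$-functor (exactness of $C^\bullet_{roos}(S;-)$ plus the zig-zag lemma already stated in the paper) that is effaceable by the injectives of Construction \ref{shvMon} and agrees with $\Gamma$ in degree zero, hence coincides with the derived functors of $\Gamma$; the cone computation you isolate is the heart of either version.
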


The proof can be found in \cite{curry}.

\section{Beat collapsing and cohomology invariance} \label{S3}

Let $(S, \F)$ be a sheaved space and $v \in S$ be a beat element. Denote by $S' = S \setminus \{ v \}$. Then $(S',{\F|_{S'}})$ is constructed by \textit{collapsing of beat} $v$. In this section we show that beat collapsing does not change cohomology of $(S, \F)$. 

\begin{remark}
    Let us briefly discuss functoriality of cohomology of sheaved spaces. Let $f: (S, \F) \to (T, \G)$ be a mapping of sheaved spaces. Recall that by definition $f = (a, \A)$ consists of two mappings.
$$
a: S \hookrightarrow T 
$$
$$
\A: a^\ast \G \to \F
$$

where $a^\ast \G$ denotes pullback of sheaf $\G$. Then we can decompose $f$ into two mappings.

$$
h: (S, \F) \to (S, a^\ast \G), \ \text{and} \ g: (S, a^\ast \G) \to (T, \G).
$$

$h$ is just morphism of sheaves $\F \to \G$ on the same topological space. It is a known fact that such mappings are functorial in category $\mathrm{Shvs}(S)$. Functoriality of mapping $g$ directly follows from B. Iversen~\cite[Page 100]{iversen}.
\end{remark}

\begin{theorem} \label{cohom_inv}
    Let $v$ be a beat element of $(S, \F)$. Denote by $S' = S \setminus \{ v \}$. The canonical inclusion $S' \to S$ induces the natural isomorphism

    $$H^{\ast}(S, \F) \cong H^{\ast}(S', \F|_{S'}) $$
\end{theorem}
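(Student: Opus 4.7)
The plan is to reduce the claim to a statement about simplicial cohomology with local coefficients, using Theorem \ref{thmGeneralSheafToLocCoeffSyst}. Under the Roos identification $H^*(S,\F)\cong H_{al}^*(\ord(S);\hat\F)$, the canonical inclusion $i\colon S'\hookrightarrow S$ induces a simplicial subcomplex inclusion $\ord(S')\hookrightarrow\ord(S)$, and one checks directly that the restriction of $\hat\F$ to $\ord(S')$ coincides with $\widehat{\F|_{S'}}$. Hence the cochain-level restriction
$$
r\colon C_{al}(\ord(S);\hat\F)\twoheadrightarrow C_{al}(\ord(S');\widehat{\F|_{S'}})
$$
is surjective, and the induced map on cohomology matches the map coming from the inclusion of sheaved spaces. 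Let $K^\bullet$ denote the kernel of $r$, the subcomplex spanned by cochains supported on chains of $\ord(S)$ that contain $v$. By Lemma \ref{lemZigZag}, it suffices to prove $H^*(K^\bullet)=0$.

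To establish acyclicity of $K^\bullet$ I will exhibit a discrete Morse-style pairing on the chains of $\ord(S)$ containing $v$. Let $a$ denote the unique Hasse neighbor of $v$ provided by the beat condition ($a<v$ in the downbeat case, $a>v$ in the upbeat case). The beat property forbids any element of $S$ strictly between $v$ and $a$, so every chain $\sigma\ni v$ in $\ord(S)$ either already contains $a$ (necessarily adjacent to $v$) or admits a unique enlargement $\sigma\cup\{a\}$ with $a$ adjacent to $v$. This yields a perfect pairing $\sigma\leftrightarrow\sigma\cup\{a\}$. The pairing is compatible with $\hat\F$ up to a canonical isomorphism $\alpha_\sigma\colon\hat\F(\sigma)\to\hat\F(\sigma\cup\{a\})$: if $\sigma_{\max}$ is unchanged when $a$ is added, take $\alpha_\sigma=\id$; this is automatic in the downbeat case and also in the upbeat case whenever $\sigma_{\max}>v$, since then $\sigma_{\max}\geq a$ and $\sigma_{\max}\ne a$ forces $\sigma_{\max}>a$. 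The exceptional case is upbeat with $\sigma_{\max}=v$, where $(\sigma\cup\{a\})_{\max}=a$; here one takes $\alpha_\sigma=\F(v\leq a)$, which is an isomorphism by the definition of a sheaved upbeat vertex.

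The pairing together with the $\alpha$'s yields a cochain homotopy $h\colon K^j\to K^{j-1}$ defined by
$$
(h\phi)_\tau=\begin{cases}\varepsilon_\tau\,\alpha_\tau^{-1}\phi_{\tau\cup\{a\}} & \text{if }a\notin\tau,\\ 0 & \text{if }a\in\tau,\end{cases}
$$
where $\varepsilon_\tau=\pm 1$ is the simplicial sign produced by inserting $a$ at its unique admissible position in $\tau$. The identity $dh+hd=\id_{K^\bullet}$ is then verified by case split on whether a $j$-chain $\sigma\ni v$ contains $a$: if $a\in\sigma$, only the face $\sigma\setminus\{a\}$ contributes to $(dh\phi)_\sigma$ and returns $\phi_\sigma$, while $(hd\phi)_\sigma=0$; if $a\notin\sigma$, the diagonal summand in $(hd\phi)_\sigma$, corresponding to the face $\sigma\subset\sigma\cup\{a\}$ inside $(d\phi)_{\sigma\cup\{a\}}$, reproduces $\phi_\sigma$, and the remaining boundary contributions cancel pairwise with their counterparts in $(dh\phi)_\sigma$. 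The main obstacle lies precisely in this cancellation: the incidence signs $[\tau:\sigma]$ shift with the ordinal position of $a$ in each chain, and in the upbeat case one must use the cocycle identity $\F(v\leq\sigma_{\max})=\F(a\leq\sigma_{\max})\circ\F(v\leq a)$ (valid whenever $\sigma_{\max}\geq a$) to merge the two subcases of $\alpha$ into a single uniform formula. Once $dh+hd=\id_{K^\bullet}$ is established, $K^\bullet$ is acyclic and Lemma \ref{lemZigZag} yields the desired natural isomorphism $H^*(S,\F)\cong H^*(S',\F|_{S'})$.
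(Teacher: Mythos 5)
Your argument is correct, but it takes a genuinely different route from the paper's. The paper stays on the derived-functor side: it checks by direct computation that global sections are unchanged by a beat collapse (Lemmas \ref{global_down}, \ref{global_up}), shows that the restriction of the Godement resolution to $S'$ is again an injective resolution (Lemmas \ref{swinj}, \ref{injres}), and concludes that the two complexes of global sections coincide. You instead pass to the Roos/simplicial cochain complex via Theorem \ref{thmGeneralSheafToLocCoeffSyst}, identify the kernel $K^\bullet$ of the restriction map as the summands indexed by chains through $v$, and contract it by the standard cone homotopy with apex the unique Hasse neighbour $a$; the beat condition enters exactly where it should, guaranteeing that $\sigma\cup\{a\}$ is again a chain and, in the upbeat case, that $\alpha_\sigma=\F(v\leq a)$ is invertible. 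This is in effect an algebraic discrete Morse matching, and it buys a uniform treatment of the upbeat and downbeat cases, a cochain-level homotopy equivalence rather than a bare isomorphism of cohomology groups, and an argument that sits naturally next to the paper's own Section \ref{S5}, where the same short-exact-sequence-of-Roos-complexes strategy reappears (your downbeat case is precisely the instance of Theorem \ref{downSetThrm} in which $S_{<v}$ has a greatest element). Two points deserve tightening. First, the sign and cancellation bookkeeping for $dh+hd=\id$ is asserted rather than carried out; it does go through as the standard simplicial cone contraction, with the needed commutation $\alpha_\sigma\circ\widehat{\F}(\tau\subseteq\sigma)=\widehat{\F}\bigl(\tau\cup\{a\}\subseteq\sigma\cup\{a\}\bigr)\circ\alpha_\tau$ supplied by functoriality of $\widehat{\F}$, since $\alpha_\tau$ is nothing but $\widehat{\F}(\tau\subseteq\tau\cup\{a\})$. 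Second, the claim that the resulting isomorphism is the one induced by the canonical inclusion of sheaved spaces relies on naturality of the comparison in Theorem \ref{thmGeneralSheafToLocCoeffSyst}, which the paper states only as an isomorphism of groups; this is a real (if standard) point to record, though the paper's own proof leans on an analogous unproved naturality assertion for its global-section identifications.
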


Before we prove \ref{cohom_inv} we need several lemmas.

\begin{lemma} \label{global_down}
    Let $(S, \F)$ be a sheaved space, $v \in S$ be a downbeat. Denote by $S' = S \setminus \{ v \}$. The canonical inclusion $S' \to S$ induces the natural isomorphism

    $$\Gamma(S, \F) \cong \Gamma(S', \F|_{S'})$$
\end{lemma}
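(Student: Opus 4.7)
The plan is to identify both sides with explicit limits and exhibit a bijection by hand. Under the equivalence between sheaves on $X_S$ and diagrams on $S$, the sheaf $\F$ becomes a covariant functor $\F\colon\cat(S)\to\V$, and the global sections are the limit $\Gamma(S,\F)=\lim_{s\in S}\F(s)$. Concretely, an element is a family $(x_s)_{s\in S}$ with $x_s\in\F(s)$ satisfying $\F(s\leq t)(x_s)=x_t$ whenever $s\leq t$, and likewise for $\Gamma(S',\F|_{S'})$ with the family indexed by $S'$. The canonical inclusion $S'\hookrightarrow S$ induces the restriction morphism $r\colon\Gamma(S,\F)\to\Gamma(S',\F|_{S'})$ that simply forgets the $v$-component.

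To prove that $r$ is an isomorphism, I would construct an explicit inverse. Let $a<v$ be the element witnessing the downbeat condition, so that $a$ is the (unique) maximum of $S_{<v}$. Given a compatible family $(x_s)_{s\in S'}$, define the extension to $S$ by setting $x_v := \F(a\leq v)(x_a)$ and keeping the other coordinates unchanged.

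The core verification is that this extension is still compatible on $S$. Two new kinds of relations appear. For any $s<v$ one checks $\F(s\leq v)(x_s)=x_v$ by factoring $\F(s\leq v)=\F(a\leq v)\circ\F(s\leq a)$ through $s\leq a<v$ and using $\F(s\leq a)(x_s)=x_a$ from compatibility inside $S'$. For any $t>v$ one checks $\F(v\leq t)(x_v)=x_t$ symmetrically, using $a<v<t$ to factor $\F(a\leq t)=\F(v\leq t)\circ\F(a\leq v)$ and the relation $\F(a\leq t)(x_a)=x_t$ which already holds in $S'$. That $r$ and the extension are mutually inverse is then immediate: one composition is the identity by construction, and for the other direction any compatible family on $S$ already satisfies $x_v=\F(a\leq v)(x_a)$ as the compatibility relation across the edge $a\leq v$.

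The whole argument hinges on a single observation: under the downbeat hypothesis the value at $v$ is completely determined by the value at $a$, so $v$ contributes no genuinely new limit data. I do not expect a serious obstacle beyond keeping the direction of the restriction maps straight in the sheaf/diagram dictionary; naturality of the constructed isomorphism in $\F$ is then automatic from naturality of the limit identification.
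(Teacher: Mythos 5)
Your proposal is correct and follows essentially the same route as the paper's proof: both identify global sections with compatible families, take the forgetful map as the restriction, and build the inverse by setting the $v$-coordinate to $\mathrm{res}^{a}_{v}(x_a)$ for the downbeat witness $a$, then verify compatibility by factoring every relation involving $v$ through $a$. If anything, your verification is slightly more explicit than the paper's about the relations $s<v$ with target $v$, which the paper folds into its ``$t\neq v$'' case.
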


\begin{proof}
    Recall that for any partially ordered set $T$ equipped with a sheaf $\G$ by construction $\Gamma(T, \G) \subseteq \prod_{t \in T} \G(t)$ consists of all elements $x$ such that for any $t \leq t'$ we have $\mathrm{res}^{t}_{t'}(p_t(x)) = p_{t'}(x)$, where $p_s: \prod_{t \in T} \G(t) \to \G(s)$ is the canonical projection.

    Consider the map $\iota: \prod_{t \in S} \F(t) \to \prod_{t \in S'} \F|_{S'}(t)$ uniquely defined by 
    
    $$p_{s'}(\iota(x)) = p_{s'}(x)$$

    for all $s' \in S'$. Let us check that $\iota(\Gamma(S, \F)) \subseteq \Gamma(S', \F|_{S'})$.

    $$\mathrm{res}^{t}_{t'}(p_t(\iota(x))) = \mathrm{res}^{t}_{t'}(p_t(x)) = p_{t'}(x) = p_{t'}(\iota(x))$$
    
    Let $u$ be the origin of edge incoming to $v$ in Hasse diagram notation. Consider the map $f: \prod_{t \in S'} \F|_{S'}(t) \to \prod_{t \in S} \F|_{S}(t)$ uniquely defined by 
    
    $$p_{s}(f(x)) = \begin{cases}
        \mathrm{res}^{u}_{v}(p_u(x)) & s = v \\
        p_s(x) & s \neq v
    \end{cases}$$

    for all $s \in S$. Let us check that $f(\Gamma(S', \F|_{S'})) \subseteq \Gamma(S, \F)$. 

    \textit{Case 1}. $t \neq v$.

    $$\mathrm{res}^{t}_{t'}(p_t(f(x))) = \mathrm{res}^{t}_{t'}(p_t(x)) = p_{t'}(x) = p_{t'}(f(x))$$

    \textit{Case 2}. $t = v$.

    $$\mathrm{res}^{t}_{t'}(p_t(f(x))) = \mathrm{res}^{v}_{t'}(p_v(f(x)))  = \mathrm{res}^{v}_{t'}(\mathrm{res}^{u}_{v}(p_u(x))) = \mathrm{res}^{v}_{t'}(p_v(x)) =p_{t'}(x) = p_{t'}(f(x))$$

    Let us proof that $\iota \circ f = \id$.

    $$p_{s'}((\iota \circ f) (x)) = p_{s'}(\iota(f(x)) = p_{s'}(f(x)) = p_{s'}(x)$$.

    Let us proof that $f \circ \iota = \id$

    \textit{Case 1}. $s \neq v$.

    $$p_s((f \circ \iota) (x)) = p_{s}(f(\iota(x))) = p_{s}(i(x)) = p_s(x)$$

    \textit{Case 2}. $s = v$.

    $$p_s((f \circ \iota) (x)) = p_v(f(\iota(x))) = \mathrm{res}_v^u(p_u(\iota(x))) = p_v(\iota(x)) = p_v(x)$$
\end{proof}

\begin{lemma} \label{global_up}
    Let $(S, \F)$ be a sheaved space, $v \in S$ be a upbeat. Denote by $S' = S \setminus \{ v \}$. The canonical inclusion $S' \to S$ induces the natural isomorphism

    $$\Gamma(S, \F) \cong \Gamma(S', \F|_{S'})$$
\end{lemma}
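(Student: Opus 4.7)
The plan is to mirror the proof of Lemma \ref{global_down}, with the single structural change that we now invert the restriction along the edge that makes $v$ upbeat. Let $a$ be the unique minimal element of $\{b \in S : b > v\}$, which exists because $v$ is upbeat. By the definition of an upbeat element of a \emph{sheaved} space, the map $\mathrm{res}^{v}_{a}\colon \F(v) \to \F(a)$ is an isomorphism. This is the only place where the sheaved upbeat hypothesis (as opposed to merely posetal upbeat) will be used.

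First I would define $\iota\colon \prod_{t \in S} \F(t) \to \prod_{t \in S'} \F|_{S'}(t)$ by $p_{s'}(\iota(x)) = p_{s'}(x)$ for $s' \in S'$, and observe that $\iota(\Gamma(S,\F)) \subseteq \Gamma(S', \F|_{S'})$ by exactly the same one-line computation as in Lemma \ref{global_down}. Next I would build the prolongation $f\colon \prod_{t \in S'} \F|_{S'}(t) \to \prod_{t \in S} \F(t)$ via
$$p_s(f(x)) = \begin{cases} (\mathrm{res}^{v}_{a})^{-1}(p_a(x)), & s = v, \\ p_s(x), & s \neq v, \end{cases}$$
which is well-defined precisely because $\mathrm{res}^{v}_{a}$ is invertible.

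The substantive step is to verify $f(\Gamma(S', \F|_{S'})) \subseteq \Gamma(S, \F)$, i.e., that $f(x)$ satisfies every compatibility $\mathrm{res}^{t}_{t'}(p_t(f(x))) = p_{t'}(f(x))$ in $S$. For $t, t' \neq v$ it is immediate. For $t = b < v = t'$ (necessarily $b \in S'$), applying $\mathrm{res}^{v}_{a}$ to both sides and using functoriality reduces the claim to $\mathrm{res}^{b}_{a}(p_b(x)) = p_a(x)$, which is the $b < a$ compatibility of $x$ in $S'$; note $b < a$ indeed holds in $S'$ since $b < v < a$ and both endpoints lie in $S'$. For $t = v < b = t'$, the upbeat property forces $b \geq a$, so $b \in S'$, and the factorization $\mathrm{res}^{v}_{b} = \mathrm{res}^{a}_{b} \circ \mathrm{res}^{v}_{a}$ collapses the left side to $\mathrm{res}^{a}_{b}(p_a(x))$, again matched by the $a \leq b$ compatibility of $x$ in $S'$.

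Finally, $\iota \circ f = \id$ is immediate by inspection, and for $f \circ \iota$ the only nontrivial case is $s = v$, where it reads $(\mathrm{res}^{v}_{a})^{-1}(p_a(x)) = p_v(x)$; this follows by applying $(\mathrm{res}^{v}_{a})^{-1}$ to the compatibility $p_a(x) = \mathrm{res}^{v}_{a}(p_v(x))$ of the original section. Naturality in $\F$ is automatic since every ingredient in $\iota$ and $f$ commutes with sheaf morphisms. The main obstacle is really only bookkeeping: the isomorphism hypothesis is indispensable for inverting $\mathrm{res}^{v}_{a}$, and the upbeat condition $b > v \Rightarrow b \geq a$ is exactly what guarantees that every compatibility in $S$ involving $v$ has a counterpart already present in $S'$.
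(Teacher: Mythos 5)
Your proposal is correct and follows essentially the same route as the paper's own proof: forget the $v$-coordinate for one direction, and use the inverse of the restriction $\mathrm{res}^{v}_{a}$ (the paper writes $u$ for your $a$) to prolong sections back for the other. If anything, your case analysis is slightly more thorough than the paper's, since you explicitly verify the compatibilities of the form $b < v$ (where $v$ is the larger endpoint), which the paper's ``Case 1'' glosses over.
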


\begin{proof}
     Recall that for any partially ordered set $T$ equipped with a sheaf $\G$ by construction $\Gamma(T, \G) \subseteq \prod_{t \in T} \G(t)$ consists of all elements $x$ such that for any $t \leq t'$ we have $\mathrm{res}^{t}_{t'}(p_t(x)) = p_{t'}(x)$, where $p_s: \prod_{t \in T} \G(t) \to \G(s)$ is the canonical projection.

    Consider the map $\iota: \prod_{t \in S} \F(t) \to \prod_{t \in S'} \F|_{S'}(t)$ uniquely defined by 
    
    $$p_{s'}(\iota(x)) = p_{s'}(x)$$

    for all $s' \in S'$. Let us check that $\iota(\Gamma(S, \F)) \subseteq \Gamma(S', \F|_{S'})$.

    $$\mathrm{res}^{t}_{t'}(p_t(\iota(x))) = \mathrm{res}^{t}_{t'}(p_t(x)) = p_{t'}(x) = p_{t'}(\iota(x))$$

    Let $u$ be the tail of edge outgoing from $v$ in Hasse diagram notation. Recall that $\mathrm{res}^{v}_{u}$ is isomorphism by definition, hence $(\mathrm{res}^{v}_{u})^{-1}$ is well-defined. Denote by $h = (\mathrm{res}^{v}_{u})^{-1}$. Consider the map $f: \prod_{t \in S'} \F|_{S'}(t) \to \prod_{t \in S} \F|_{S}(t)$ uniquely defined by 
    
    $$p_{s}(f(x)) = \begin{cases}
        h(p_u(x)) & s = v \\
        p_s(x) & s \neq v
    \end{cases}$$

    for all $s \in S$. Let us check that $f(\Gamma(S', \F|_{S'})) \subseteq \Gamma(S, \F)$. 

    \textit{Case 1}. $t \neq v$.

    $$\mathrm{res}^{t}_{t'}(p_t(f(x))) = \mathrm{res}^{t}_{t'}(p_t(x)) = p_{t'}(x) = p_{t'}(f(x))$$

    \textit{Case 2}. $t = v$.

    $$\mathrm{res}^{t}_{t'}(p_t(f(x))) = \mathrm{res}^{v}_{t'}(p_v(f(x)))  = \mathrm{res}^{v}_{t'}(h(p_u(x))) = \mathrm{res}^{v}_{t'}(p_v(x)) = p_{t'}(x) = p_{t'}(f(x))$$

    Let us proof that $\iota \circ f = \id$.

    $$p_{s'}((\iota \circ f) (x)) = p_{s'}(\iota(f(x)) = p_{s'}(f(x)) = p_{s'}(x)$$.

    Let us proof that $f \circ \iota = \id$

    \textit{Case 1}. $s \neq v$.

    $$p_s((f \circ \iota) (x)) = p_{s}(f(\iota(x))) = p_{s}(i(x)) = p_s(x)$$

    \textit{Case 2}. $s = v$.

    $$p_s((f \circ \iota) (x)) = p_v(f(\iota(x))) = h(p_u(\iota(x))) = p_v(\iota(x)) = p_v(x)$$
\end{proof}

\begin{corollary} \label{crl}
    Let $(S, \F)$ be a sheaved space, $v \in S$ be a beat element. Denote by $S' = S \setminus \{ v \}$. The canonical inclusion $S' \to S$ induces the natural isomorphism

    $$\Gamma(S, \F) \cong \Gamma(S', \F|_{S'})$$
\end{corollary}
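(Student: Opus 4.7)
The plan is almost trivial at this point: the Corollary is a direct consequence of the two preceding lemmas together with the definition of a beat element. By definition, a beat element $v \in S$ is either a downbeat or an upbeat, and these two cases are mutually exhaustive. I would therefore split into these two cases and invoke Lemma \ref{global_down} in the downbeat case and Lemma \ref{global_up} in the upbeat case, each of which supplies the required isomorphism of global sections.

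The only point that deserves a sentence of explicit verification is the naturality claim, namely that the isomorphism is induced by the canonical inclusion $S' \hookrightarrow S$. This is immediate from the construction in both lemmas: the map $\iota$ produced there is literally the projection $\prod_{t \in S} \F(t) \to \prod_{t \in S'} \F|_{S'}(t)$ along the inclusion, and restricting a compatible family of sections on $S$ to $S'$ is exactly the map on global sections induced by the pullback along $S' \hookrightarrow S$. Hence the displayed isomorphism really is the one induced by the inclusion, not merely an abstract isomorphism of the two objects.

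There is no genuine obstacle here; the substantive work was done in Lemmas \ref{global_down} and \ref{global_up}, where the inverse map $f$ had to be constructed differently depending on whether $v$ is a downbeat (using the restriction $\mathrm{res}^{u}_{v}$ to reconstruct the stalk at $v$ from the stalk at its unique lower neighbor $u$) or an upbeat (using the inverse $(\mathrm{res}^{v}_{u})^{-1}$ of the restriction map, which exists precisely because the definition of upbeat in the sheaved setting requires this restriction to be an isomorphism). The Corollary itself is a two-line case analysis that packages these two lemmas into a single statement applicable to any beat element, and will serve as the base ingredient when we later pass from global sections to the full cohomological statement of Theorem \ref{cohom_inv}.
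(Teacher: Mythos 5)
Your proposal is correct and matches the paper's own proof, which likewise dispatches the corollary by splitting into the downbeat and upbeat cases and citing Lemmas \ref{global_down} and \ref{global_up} respectively. Your additional remark on why the isomorphism is the one induced by the inclusion is a harmless (and reasonable) elaboration of the same argument.
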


\begin{proof}
    Corollary directly follows from Lemmas \ref{global_down}, \ref{global_up}.
\end{proof}

Next step in proof is showing that if $\I$ is injective resolution of a sheaf $\F$ on $S$ then $\I|_{S'}$ is injective resolution of the sheaf $\F|_{S'}$ on $S'$. Construction \ref{shvMon} allows us to proof injectivity of only sheaves of form $\lceil s \rceil_{W}$, where $W$ is an injective object in $\V$. We will discuss it more formally later.

\begin{lemma} \label{swinj}
    Let $(S, \lceil s \rceil_{W})$ be a sheaved space, where $W$ is an injective object in $\V$, $v \in S$ be a beat element, $s \in S$. Denote by $S' = S \setminus \{ v \}$. Then $(\lceil s \rceil_{W})|_{S'}$ is an injective object of $\mathrm{Shvs}(S')$.
\end{lemma}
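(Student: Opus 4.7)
The plan is to carry out a case analysis on the relative position of $s$ and $v$, and to show that in every case $(\lceil s \rceil_W)|_{S'}$ is isomorphic to a sheaf of the form $\lceil t \rceil_W$ on $S'$ for a suitable $t \in S'$ (or is the zero sheaf). Injectivity will then follow immediately from Fact \ref{inj_shv}. The starting observation is that for any $t \in S'$ one has $(\lceil s \rceil_W)|_{S'}(t) = W$ if $t \leq s$ in $S$ and $0$ otherwise, with restriction maps inherited from $\lceil s \rceil_W$; so the sheaf on $S'$ is completely determined by its support, which is a lower set in $S'$.

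First suppose $v \neq s$, so that $s \in S'$. The set $\{t \in S' : t \leq s\}$ is exactly the downset of $s$ in the induced order on $S'$, regardless of whether $v \leq s$ (if $v \leq s$ it is simply removed from the downset, and the remaining elements form the downset of $s$ in $S'$). Consequently $(\lceil s \rceil_W)|_{S'}$ is the sheaf $\lceil s \rceil_W$ on $S'$, injective by Fact \ref{inj_shv}.

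For the remaining case $v = s$, I would split according to the type of sheaved beat. If $v$ is a downbeat, let $u < v$ be the downbeat witness, so that every $t < v$ satisfies $t \leq u$. Then $\{t \in S' : t < v\}$ coincides with the downset of $u$ in $S'$, and $(\lceil v \rceil_W)|_{S'}$ is the sheaf $\lceil u \rceil_W$ on $S'$, again injective. If $v$ is an upbeat in the sheaved sense, let $a > v$ be the witness. The definition of a sheaved upbeat requires $\lceil v \rceil_W(v \leq a)$ to be an isomorphism; but $a \not\leq v$, so $\lceil v \rceil_W(a) = 0$ and this map reads $W \to 0$, which is an isomorphism only when $W = 0$. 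In that degenerate case the restricted sheaf is the zero sheaf, trivially injective.

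The main subtlety is the upbeat subcase with $v = s$: one must read the sheaved-upbeat definition carefully and notice that the isomorphism condition, applied to this particular sheaf $\lceil v \rceil_W$, forces $W = 0$ and thereby eliminates the case. The rest is a routine verification that the support of $(\lceil s \rceil_W)|_{S'}$ is always a principal downset in $S'$ and that the restriction maps (identity between points of the support, zero otherwise) match those of the candidate sheaf $\lceil t \rceil_W$ on $S'$.
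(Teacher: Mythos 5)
Your cases $s \neq v$, and $s = v$ with $v$ a downbeat, are correct and essentially identical to the paper's proof: in both situations the restricted sheaf is again of the form $\lceil t \rceil_W$ on $S'$ and Fact \ref{inj_shv} applies directly.

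The gap is in the upbeat case with $s = v$, which you eliminate by arguing that the sheaved-upbeat condition forces $W = 0$. That reads the isomorphism requirement as a condition on the auxiliary sheaf $\lceil s \rceil_W$ itself, but that is not how the lemma is used. In Lemma \ref{injres} the vertex $v$ is a beat element of the original sheaved space $(S, \F)$ --- the isomorphism requirement concerns the restriction map of $\F$ at $v$ --- while the sheaves being restricted are the summands $\lceil s \rceil_{M_s}$ of the Godement resolution, with $M_s$ a generally nonzero injective object. So the substantive content of the lemma is precisely the case you discard: $v$ an upbeat point of the underlying poset, $s = v$, $W \neq 0$. In that case the support of $(\lceil v \rceil_W)|_{S'}$ is $S_{<v}$, which is a lower set but in general not a principal one (an upbeat point need not have a greatest lower neighbour), so the restricted sheaf is not of the form $\lceil t \rceil_W$ and Fact \ref{inj_shv} cannot be invoked; your closing claim that the support is always a principal downset fails exactly here. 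The paper handles this case by a lifting argument: any extension problem $\C \hookrightarrow \D$ over $S'$ equipped with a map to $(\lceil v \rceil_W)|_{S'}$ is extended to one over $S$ by setting $\A(v) = \C(u)$ and $\B(v) = \D(u)$ for the unique upper cover $u$ of $v$, solved there using injectivity of $\lceil v \rceil_W$ in $\mathrm{Shvs}(S)$, and then restricted back to $S'$. An argument of this kind (or a direct verification of the lifting property for the sheaf supported on $S_{<v}$) is needed to close your proof.
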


\begin{proof}
    Recall that by definition 

    $$\lceil s \rceil_W(t) = \begin{cases}
        W, & \text{if $t \leq s$}\\
        0, & \text{otherwise.}
    \end{cases}$$

    If $s \neq v$, by definition we get

    $$(\lceil s \rceil_W)|_{S'}(t) = \begin{cases}
        W, & \text{if $t \leq s$}\\
        0, & \text{otherwise.}
    \end{cases}$$

    which is injective by Fact \ref{inj_shv}. Let us discuss case when $s = v$, i.e. $s$ is the deleted vertex.

    \textit{Case 1.} $v$ is a downbeat.

    Since $v$ is a downbeat there exists exactly one $u$ such that exists arrow $e$ from $u$ to $v$ in Hasse diagram notation. Consider following sheaf $\G$ on $S'$.

    $$\G(t) = \lceil u \rceil_W(t) = \begin{cases}
        W, & \text{if $t \leq u$}\\
        0, & \text{otherwise.}
    \end{cases}$$

    Note that $\G$ is injective by Fact \ref{inj_shv} and $\G$ equals $(\lceil v \rceil_W)|_{S'}$. Indeed the only deleted vertex $v$ is a downbeat, therefore by definition $t \leq u$ for any $t \in S$ such that $t < v$.
    
    \textit{Case 2.} $v$ is an upbeat.

    Consider following diagram.

    \begin{equation}
        \begin{tikzcd}[row sep=2.5em]
        & \lceil s \rceil_W \\
        \A \arrow[r, "g'", hook] \arrow[ur, "h'"] &
        \B \arrow[u, "\varphi"', dotted] \\
        \end{tikzcd}
        \label{diag:up}
    \end{equation}

    \vspace{-20pt}

    $\varphi$ exists since $\lceil s \rceil_W$ is injective object in $\mathrm{Shvs}(S)$. Our goal is to show the existence of $\psi$ in diagram below.

    \[
    \begin{tikzcd}[row sep=2.5em]
    & (\lceil s \rceil_W)|_{S'} \\
    \C \arrow[r, "g", hook] \arrow[ur, "h"] &
    \D \arrow[u, "\psi"', dotted] \\
    \end{tikzcd}
    \]
    
    \vspace{-20pt}

    It is enough to show that any sheaf $\C$ on $S'$ can be described as restriction of some sheaf $\A$ on $S$ onto $S'$. Since $v$ is a upbeat there exists exactly one $u$ such that exists arrow $e$ from $v$ to $u$ in Hasse diagram notation. Now let us construct $\A, \B$ as follows.

    $$
    \A(s) = \begin{cases}
        \C(s), & \text{if $t \neq v$}\\
        \C(u), & \text{otherwise.}
    \end{cases}
    \hspace{20pt}
    \B(s) = \begin{cases}
        \D(s), & \text{if $t \neq v$}\\
        \D(u), & \text{otherwise.}
    \end{cases}
    $$

    Then by construction $\A|_{S'} = \C$ and $\B|_{S'} = \D$. Consider mappings $g': \A \to \B, \ h': \A \to \lceil s \rceil_W$ defined as follows.

    $$
    g'(\A(t)) = \begin{cases}
        g(\C(t)), & \text{if $t \neq v$}\\
        g(\C(u)), & \text{otherwise.}
    \end{cases}
    \hspace{20pt}
    h'(\A(t)) = \begin{cases}
        h(\C(t)), & \text{if $t \neq v$}\\
        h(\C(u)), & \text{otherwise.}
    \end{cases}
    $$

    It is obvious that resulting diagram commutes. It means that there exists $\varphi: \B \to \lceil s \rceil_W$ by definition of injective object. That allows us to apply the restriction functor $S \hookrightarrow S'$ to diagram \ref{diag:up} and therefore construct $\psi$ as restriction of $\varphi$.

\end{proof}

\begin{lemma} \label{injres}
    Let $v \in S$ be a beat element. Denote by $S' = S \setminus \{ v \}$. If $\I$ is injective resolution of a sheaf $\F$ on $S$ then $\I|_{S'}$ is injective resolution of the sheaf $\F|_{S'}$ on $S'$.
\end{lemma}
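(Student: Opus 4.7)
The plan is to split the statement into two independent claims: (i) the restricted sequence $0\to \F|_{S'}\to\I^0|_{S'}\to\I^1|_{S'}\to\cdots$ remains exact, and (ii) each restricted term $\I^k|_{S'}$ is injective in $\mathrm{Shvs}(S')$. I will do (i) first because it is essentially free, then attack (ii), which is the genuinely new content.

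For (i), I would invoke the equivalence $\mathrm{Shvs}(X_S)\simeq\mathrm{Diag}(S,\V)$: since $\V$ is abelian and $\mathrm{Diag}(S,\V)$ is a functor category, kernels, cokernels and exactness are computed pointwise. By definition of restriction, $\F|_{S'}(t)=\F(t)$ and $\I^k|_{S'}(t)=\I^k(t)$ for every $t\in S'$, so the pointwise exactness of the original resolution at each $t\in S$ restricts to pointwise exactness at each $t\in S'$ (discarding only the information at $v$). This immediately gives exactness of the restricted sequence.

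For (ii), the critical point is to handle an \emph{arbitrary} injective sheaf, not merely one of the form $\lceil s\rceil_W$ already treated in Lemma \ref{swinj}. My plan is to use the standard splitting trick: given any injective $\I^k$, Construction \ref{shvMon} supplies a monomorphism $\I^k\hookrightarrow \J$ with $\J=\bigoplus_{s\in S}\lceil s\rceil_{M_s}$ for injective $M_s\in\V$. Injectivity of $\I^k$ forces this monomorphism to split, so $\J\cong\I^k\oplus\I''$ for some $\I''$. Since direct sums in $\mathrm{Diag}(S,\V)$ are computed pointwise, the restriction functor $(-)|_{S'}$ is additive and commutes with them, yielding $\J|_{S'}\cong\I^k|_{S'}\oplus\I''|_{S'}$. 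By Lemma \ref{swinj}, each summand $\lceil s\rceil_{M_s}|_{S'}$ is injective in $\mathrm{Shvs}(S')$, and the sum is finite (since $S$ is finite), hence $\J|_{S'}$ is injective. A direct summand of an injective object is injective, so $\I^k|_{S'}$ is injective.

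The main obstacle I anticipate is exactly the one I flagged: making sure (ii) is not tautologically restricted to the Godement form. The splitting argument resolves this by reducing the general case to the already-proved Lemma \ref{swinj}; the only auxiliary fact required is that a finite direct sum of injectives is injective, which holds for $\V\in\{\Abel,\ko\Vect\}$ and more generally whenever $\V$ has enough injectives with finite direct sums being exact. Combining (i) and (ii) yields that $\I|_{S'}$ is an injective resolution of $\F|_{S'}$, completing the proof.
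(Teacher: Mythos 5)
Your proposal is correct, and its exactness half coincides with the paper's: the paper argues via stalkwise exactness, which for Alexandrov spaces is precisely your pointwise exactness in $\mathrm{Diag}(S,\V)$, so parts (i) are the same argument in two dialects. The injectivity half is where you genuinely diverge, and your version is stronger. The paper's proof simply asserts that Lemma \ref{swinj} implies each restricted term is injective; but Lemma \ref{swinj} literally covers only sheaves of the form $\lceil s\rceil_W$, so the paper's argument is valid verbatim only for resolutions whose terms are of Godement type $\bigoplus_{s\in S}\lceil s\rceil_{M_s}$ --- and even then it tacitly uses that restriction commutes with direct sums and that a finite direct sum of injectives is injective. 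That suffices for the paper's application (Theorem \ref{cohom_inv} works with the Godement resolution), but the lemma as stated quantifies over \emph{arbitrary} injective resolutions, and your splitting trick --- embed an arbitrary injective $\I^k$ into $\mathcal{J}=\bigoplus_{s\in S}\lceil s\rceil_{M_s}$ via Construction \ref{shvMon}, split the monomorphism by injectivity of $\I^k$, restrict, and conclude since a direct summand of an injective is injective --- is exactly what is needed to prove the statement in the generality in which it is written. One small simplification to your write-up: the auxiliary fact that finite direct sums of injectives are injective needs no hypothesis beyond $\V$ being abelian, since finite coproducts coincide with finite products and $\mathrm{Hom}(-,\prod_i M_i)\cong\prod_i\mathrm{Hom}(-,M_i)$, so your hedge about $\V\in\{\Abel,\ko\Vect\}$ can be dropped. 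In short: the paper's route is shorter but implicitly specializes the lemma to Godement resolutions; yours closes that gap at the cost of one standard splitting argument.
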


\begin{proof}
     Lemma \ref{swinj} implies that $I|_S'$ consists of injective objects in category $\mathrm{Shvs}(S')$. We also know that $I|_S'$ is a differential complex, i.e. $d^2 = 0$ since $I$ is a differential complex.

    The last thing to check is whether $I|_S'$ is exact. Recall that exactness of sheaves sequence is equivalent to exactness of stalk sequence for each point. Space $S' \subset S$ and all stalk sequences are exact for $S$, therefore all stalk sequences are exact for all points in $S'$.
\end{proof}

Now we are ready to proof main result of this section.

\begin{proof}[Proof of Theorem \ref{cohom_inv}]
    Let following differential complex be the Godement's injective resolution of $\F$. 

    $$
    0 \to \F \to \I_0 \to \I_1 \to \I_2 \to \dots 
    $$

    Note that it is exact and each $\I_i$ is injective. Applying functor of global sections $\Gamma$ to it gives us following differential complex $C_S$.

    \begin{equation}
    0 \xrightarrow[]{d} \Gamma(X_S, \F) \xrightarrow[]{d} \Gamma(X_S, \I_0) \xrightarrow[]{d} \Gamma(X_S, \I_1) \xrightarrow[]{d} \dots 
    \label{injresseq}
    \end{equation}
    
    Recall that by definition

    $$
    H^{\ast}(X_S, \F) = H^{\ast}(C_S, d).
    $$

    Consider the sequence $\I|_{S'}$ defined as restriction of sequence \ref{injresseq}.

    $$
    0 \to \F|_{S'} \to \I_0|_{S'} \to \I_1|_{S'} \to \I_2|_{S'} \to \dots 
    $$

    Lemma \ref{injres} implies that $\I|_{S'}$ is an injective resolution of $\F|_{S'}$. This means that it can be used to compute the cohomology of $\F|_{S'}$. Let us apply the global sections functor $\Gamma$ to it, obtaining the following differential complex $C_{S'}$.

    $$
    0 \xrightarrow[]{d} \Gamma(X_{S'}, \F|_{S'}) \xrightarrow[]{d'} \Gamma(X_{S'}, \I_0|_{S'}) \xrightarrow[]{d'} \Gamma(X_{S'}, \I_1|_{S'}) \xrightarrow[]{d'} \dots 
    $$

    Corollary \ref{crl} establishes that $\Gamma(X_{S'}, \I_i|_{S'}) \cong \Gamma(X_S, \I_i)$. It means that $C_S = C_{S'}$ as spaces. The equality of differentials $d = d'$ follows from the naturality of the isomorphisms of global sections. Therefore

    $$
    H^{\ast}(X_S, \F) = H^{\ast}(C_S, d) \cong H^{\ast}(C_{S'}, d') = H^{\ast}(X_{S'}, \F|_{S'}).
    $$
\end{proof}

\section{Сore of sheaved space} \label{S4}

\begin{theorem} \label{isom}
    All cores of a sheaved space $(S, \F)$ are isomorphic.
\end{theorem}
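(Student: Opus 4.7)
The approach is by strong induction on $|S|$, following the classical confluence strategy behind Stong's poset theorem but upgraded to carry the sheaf data. If $S$ is itself a core there is nothing to show, so assume $S$ has a beat and fix two reduction sequences from $(S, \F)$ to the cores $(T_1, \G_1)$ and $(T_2, \G_2)$. Let $v_1$ and $v_2$ denote the beats collapsed at the first step of the two sequences. If $v_1 = v_2$, the inductive hypothesis applied to $(S \setminus \{v_1\}, \F|_{S \setminus \{v_1\}})$ immediately finishes the argument.

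The essential step is therefore the case $v_1 \neq v_2$, which I would split into two sub-cases. In the \emph{non-interlocking} sub-case---where neither of $v_1, v_2$ is the unique Hasse-neighbour of the other in the relevant direction---I would prove a swap lemma: $v_2$ remains a beat of $(S \setminus \{v_1\}, \F|_{\cdot})$, and vice versa, and both collapse orders produce the same sheaved space $(S \setminus \{v_1, v_2\}, \F|_{\cdot})$. This is a direct case analysis on whether each of $v_1, v_2$ is up- or down-beat; the only subtlety is that when the unique cover of an upbeat coincides with another upbeat, the new Hasse-outgoing restriction map becomes a composition of two isomorphisms and hence remains an isomorphism, preserving the sheaf upbeat condition.

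The main obstacle is the \emph{interlocking} sub-case, where $v_1$ is upbeat with unique cover $v_2$ while simultaneously $v_2$ is downbeat with unique floor $v_1$ (or the symmetric configuration). Here the swap genuinely fails: removing either element destroys the other's beat property. The decisive observation is that the upbeat hypothesis on $v_1$ forces $\F(v_1 \leq v_2)$ to be an isomorphism. Using this, I would construct a direct isomorphism of sheaved spaces
\[
(S \setminus \{v_1\}, \F|_{S \setminus \{v_1\}}) \;\cong\; (S \setminus \{v_2\}, \F|_{S \setminus \{v_2\}}),
\]
where the underlying poset bijection sends $v_2$ to $v_1$ and is the identity on $S \setminus \{v_1, v_2\}$, and the natural sheaf isomorphism is $\F(v_1 \leq v_2)$ at the swapped vertex and the identity elsewhere. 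Compatibility with all restriction maps reduces to functoriality of $\F$ on the chains $b \leq v_1 \leq v_2$ (for $b$ a strict lower bound of $v_1$) and $v_1 \leq v_2 \leq c$ (for $c$ a strict upper bound of $v_2$).

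Applying the inductive hypothesis to $(S \setminus \{v_1\}, \F|_{\cdot})$---and, in the interlocking sub-case, composing it with the explicit isomorphism above---yields $T_1 \cong T_2$ as sheaved spaces, completing the induction. The real content of the proof is concentrated in the interlocking sub-case; everything else is a straightforward Church--Rosser argument on the set of beat-collapse steps.
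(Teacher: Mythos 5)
Your proposal is correct and follows essentially the same route as the paper: the paper invokes Newman's (Diamond) lemma on the abstract rewriting system of beat collapses and verifies local confluence by the same case split, resolving the interlocking case exactly as you do, via the isomorphism $\F(v_1 \le v_2)$ forced by the sheaf upbeat condition. Your explicit strong induction on $|S|$ is just the standard proof of Newman's lemma unfolded, and your treatment of the swap cases (including the composed-isomorphism subtlety for stacked upbeats) is somewhat more detailed than the paper's.
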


\begin{proof}
    We will construct the proof using Diamond lemma (or Newman's lemma). Consider an abstract rewriting system $\T$, where elements are sheaved spaces and applying arrow is collapsing of the beat. Starting element is our sheaved space $(S, \F)$.

    Well-foundness of the relation is obvious since $S$ is finite, hence contains only finite amount of beat elements. Let us show that $\T$ is locally confluent, i.e. if we can collapse beat element $v$ and beat element $u$ on some sheaved space $(T, \G)$, there exists such $(T', {\G'})$ that it can be reached by sequence of beat collapses from $(T \setminus \{ v \}, \G|_v)$ and $(T \setminus \{u\}, \G|_u)$. 

    Indeed, if $u$ and $v$ are not connected, e.g. there is no edge $e$ such that tail of $e$ is $u (v)$ and origin of $e$ is $v (u)$, they cannot change beatness of each other. That means that following diagram exists (as beat collapses are present).

    \[ 
    \begin{tikzcd}[row sep=1.5em]
        (T, \G) \arrow[r] \arrow[d] &
        (T \setminus \{ v \}, \G|_{T \setminus \{ v \}}) \arrow[d] \\
        (T \setminus \{ u \}, \G|_{T \setminus \{ u \}}) \arrow[r]&
        (T \setminus \{ {u, v} \}, \G|_{T \setminus \{ u, v \}}) \\
    \end{tikzcd}
    \]

    Without loss of generality, assume that there exists an edge $e$ such that tail of $e$ is $u$ and origin of $e$ is $v$. If $v$ is a downbeat or $u$ is a upbeat their collapses will not affect the type of the other. The final case to consider is when $v$ is a upbeat and $u$ is downbeat. However, since $v$ is a upbeat, $\mathrm{res}^v_u$ is an isomorphism. Therefore, collapsing either $u$ or $v$ will yield the same poset (up to isomorphism) and the same sheaf (also up to isomorphism). 

    Therefore Diamond lemma states that $\T$ is confluent all together. That means that it has exactly one final element up to isomorphism. This concludes the proof.
    
\end{proof}

\begin{remark}
    To demonstrate the canonical nature of the isomorphisms from Theorem \ref{isom}, we need to establish the functoriality of confluent arrows in the abstract rewriting system. We consider this a \textit{terra incognita} and a field for future research.

    Once this aspect is proven, the only change in Theorem \ref{result_main} would be the replacement of the article \textit{a} with \textit{the} before the term "core", as we would then be able to refer to \textit{the} core of a sheaved space.
\end{remark}

\begin{theorem}
    Let $(T, \G)$ be a core of $(S, \F)$. The canonical inclusion $T \hookrightarrow S$ induces the natural isomorphism

    $$H^{\ast}(X_T, \G) \cong H^{\ast}(X_S, \F)$$
\end{theorem}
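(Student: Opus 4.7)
The plan is to iterate Theorem \ref{cohom_inv} along the chain of beat collapses defining the core. By the definition of a core of a sheaved space, there is a finite sequence
\begin{equation*}
(S, \F) = (S_0, \F_0),\ (S_1, \F_1),\ \ldots,\ (S_n, \F_n) = (T, \G)
\end{equation*}
in which each $(S_{i+1}, \F_{i+1})$ is obtained from $(S_i, \F_i)$ by collapsing one beat element $v_i \in S_i$, so that $S_{i+1} = S_i \setminus \{v_i\}$ and $\F_{i+1} = \F_i|_{S_{i+1}}$. For each $i$, Theorem \ref{cohom_inv} supplies a natural isomorphism
\begin{equation*}
H^{\ast}(X_{S_i}, \F_i) \;\xrightarrow{\;\sim\;}\; H^{\ast}(X_{S_{i+1}}, \F_{i+1})
\end{equation*}
induced by the canonical inclusion $S_{i+1} \hookrightarrow S_i$.

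I would then compose these isomorphisms in order to obtain $H^{\ast}(X_S, \F) \cong H^{\ast}(X_T, \G)$. The composite of inclusions $T = S_n \hookrightarrow S_{n-1} \hookrightarrow \cdots \hookrightarrow S_0 = S$ is precisely the canonical inclusion $T \hookrightarrow S$, and since pullback of sheaves is functorial, $(g \circ f)^{\ast} \cong f^{\ast} \circ g^{\ast}$, the composite of the step-by-step isomorphisms is exactly the isomorphism induced by $T \hookrightarrow S$, as required. This also gives naturality of the final map, since each intermediate map in Theorem \ref{cohom_inv} is natural.

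I do not anticipate any substantive obstacle, as essentially all the content is already carried by Theorem \ref{cohom_inv}; what remains is routine bookkeeping with functoriality of restriction. The one subtle point, which I would flag rather than try to resolve, is that the isomorphism constructed in this way depends \emph{a priori} on the chosen reduction sequence $(S_i, \F_i)$. Establishing that it is truly canonical — independent of the sequence of beat collapses — would require the functoriality of confluent arrows in the beat-collapse rewriting system, which is precisely the issue deferred to future work in the remark following Theorem \ref{isom}.
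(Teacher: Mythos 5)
Your proposal is correct and follows essentially the same route as the paper: both iterate Theorem \ref{cohom_inv} along the chain of beat collapses defining the core and compose the resulting natural isomorphisms via the factorization of the inclusion $T \hookrightarrow S$. Your flagged caveat about dependence on the chosen reduction sequence matches the paper's own remark after Theorem \ref{isom}.
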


\begin{proof}
    Recall that, by definition, core of sheaved space $(S, \F)$ can be obtained by consecutive deletion of vertices (beat collapses). Consider some sequence of beat collapses.

    $$
    (S, \F) \rightarrow (S_1, \F|_{S_1}) \rightarrow (S_2, \F|_{S_2}) \rightarrow \dots \rightarrow (T, \G)
    $$

    Theorem \ref{cohom_inv} implies that each of beat collapse and corresponding canonical inclusion induces the natural isomorphism of cohomology. That gives us following sequences.

    $$
    T \xhookrightarrow[g_k]{} S_{k} \xhookrightarrow[g_{k - 1}]{}  \dots \xhookrightarrow[g_2]{} S_2 \xhookrightarrow[g_1]{} S_1 \xhookrightarrow[g_0]{} S
    $$

    And corresponding induced canonical isomorphisms of cohomology.

    $$
    H^{\ast}(X_T, \G) \cong H^{\ast}(X_{S_k}, \F|_{S_k}) \cong \dots \cong H^{\ast}(X_{S_1}, \F|_{S_1}) \cong H^{\ast}(X_S, \F)
    $$

    Since inclusion $g: T \hookrightarrow S$ can be represented as composition $g_0 \circ g_1 \circ \dots \circ g_{k}$ it also induces natural isomorphism of cohomology

    $$H^{\ast}(X_T, \G) \cong H^{\ast}(X_S, \F).$$
\end{proof}

\section{Generalization of beat collapses} \label{S5}

In this section we present a generalization of Stong's beat collapses. We prove them for sheaved spaces and then discuss applications in regular posets.

\begin{definition}
    Let $S$ be a poset, $s \in S$ be a vertex. Let $S_{<s}$ be a subposet with induced ordering from $S$, consisting of all elements of $S$ which are strictly lower then $s$. In other words, 

    $$S_{<s} = \{ v \ | \ v \in S, v < s \}.$$

    We will call $S_{<s}$ \textit{a downset} of vertex $s$.
\end{definition}

\begin{definition}
    Let $S$ be a poset, $s \in S$ be a vertex. Let $S_{>s}$ be a subposet with induced ordering from $S$, consisting of all elements of $S$ which are strictly greater then $s$. In other words, 

    $$S_{>s} = \{ v \ | \ v \in S, v > s \}.$$

    We will call $S_{>s}$ \textit{an upset} of vertex $s$.
\end{definition}

\begin{theorem} \label{downSetThrm}
    Let $(S, \F)$ be a sheaved space. Let be $s$ be such vertex that $H^{\star}(|S_{<s}|)$ are trivial (i.e., $|S_{<s}|$ has the homology of a point). Then

    $$H^{\ast}(X_S, \F) \cong H^{\ast}(X_{S'}, \F|_{S'})$$

    where $S' = S \setminus \{ s \}$.
\end{theorem}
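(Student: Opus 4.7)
The plan is to transport the computation to the Roos (simplicial) cochain complex via Theorem \ref{thmGeneralSheafToLocCoeffSyst}, so that $H^{\ast}(X_S, \F) \cong H^{\ast}_{al}(\ord(S); \widehat{\F})$ and analogously for $S'$. The inclusion $S' \hookrightarrow S$ induces the simplicial inclusion of $\ord(S')$ into $\ord(S)$ as the full subcomplex of chains avoiding $s$, and one checks directly from Construction \ref{rossConst} that $\widehat{\F}$ restricted to $\ord(S')$ coincides with $\widehat{\F|_{S'}}$, because the maximum of a chain contained in $S'$ is the same in $S'$ and in $S$. This produces a short exact sequence of cochain complexes
$$0 \to K^{\ast} \to C^{\ast}_{al}(\ord(S); \widehat{\F}) \to C^{\ast}_{al}(\ord(S'); \widehat{\F|_{S'}}) \to 0,$$
where $K^{n} = \bigoplus_{\sigma \in \ord(S),\, s \in \sigma,\, \dim \sigma = n} \F(\sigma_{\max})$. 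By the zig-zag lemma (Lemma \ref{lemZigZag}), the induced inclusion will be a quasi-isomorphism as soon as $K^{\ast}$ is shown to be acyclic.

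Every simplex $\sigma \in \ord(S)$ through $s$ decomposes uniquely as $\sigma = \alpha \cup \{s\} \cup \beta$ with $\alpha$ a (possibly empty) chain in $S_{<s}$ and $\beta$ a (possibly empty) chain in $S_{>s}$. Setting $q(\sigma) = |\beta|$, filter $K^{\ast}$ by $F^{q}K^{\ast} = \bigoplus_{\sigma:\, q(\sigma) \geq q} \F(\sigma_{\max})$. The filtration is preserved by the cochain differential, because a vertex adjoined to $\sigma$ lies either in $S_{<s}$ (keeping $q$) or in $S_{>s}$ (strictly increasing $q$). The induced differential on the associated graded $\mathrm{gr}^{q} K^{\ast}$ therefore only adds vertices to $\alpha$, on which $\widehat{\F}$ acts as the identity since $\sigma_{\max}$ is unchanged. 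Consequently $\mathrm{gr}^{q} K^{\ast}$ splits as a direct sum indexed by chains $\beta \in \ord(S_{>s}) \cup \{\varnothing\}$ of length $q$, each summand being (up to a degree shift of $q+1$) the augmented simplicial cochain complex of $\ord(S_{<s})$ with constant coefficients $W_{\beta} := \F(\max(\{s\} \cup \beta))$, the $\alpha = \varnothing$ term playing the role of the augmentation.

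By hypothesis $|S_{<s}| = |\ord(S_{<s})|$ has the integral homology of a point, so its augmented simplicial chain complex over $\Z$ is a bounded acyclic complex of finitely generated free abelian groups, hence contractible. Since contractibility is preserved by any additive functor, applying $\mathrm{Hom}_{\Z}(-, W_{\beta})$ yields an acyclic augmented cochain complex for every coefficient object $W_{\beta} \in \V$. Thus $\mathrm{gr}^{q} K^{\ast}$ is acyclic for every $q$. Since $S$ is finite, the filtration on $K^{\ast}$ is bounded, so the convergent spectral sequence of the filtration (or a short induction on filtration length via the zig-zag lemma applied to $0 \to F^{q+1} \to F^{q} \to \mathrm{gr}^{q} \to 0$) forces $K^{\ast}$ itself to be acyclic, yielding the natural isomorphism $H^{\ast}(X_S, \F) \cong H^{\ast}(X_{S'}, \F|_{S'})$ induced by $S' \hookrightarrow S$.

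The step I expect to be the main obstacle is the careful combinatorial verification that the horizontal piece of the differential on $\mathrm{gr}^{q} K^{\ast}$ is genuinely the augmented simplicial coboundary of $\ord(S_{<s})$ with constant coefficients $W_{\beta}$ and the correct degree shift, keeping track of the sign conventions coming from the global vertex ordering of $\sigma = \alpha \cup \{s\} \cup \beta$ and ensuring that the $\alpha = \varnothing$ stratum is the augmentation rather than a stray copy of $W_\beta$. Once that identification is settled, transporting acyclicity from $\Z$-coefficients to an arbitrary object of $\V$ via contractibility of the underlying complex of free abelian groups is the conceptually cleanest way to avoid invoking the universal-coefficient theorem in a general abelian category.
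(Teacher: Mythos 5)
Your argument is correct, and it takes a genuinely different route from the paper's. The paper also works in the Roos complex, but it uses the short exact sequence $0\to C^{\ast}_{roos}(S',\F|_{S'})\to C^{\ast}_{roos}(S,\F)\to C^{\ast}_{roos}(S,\delta_s)\to 0$ with the skyscraper sheaf $\delta_s$ as the third term, computes $H^{\ast}(C^{\ast}(S,\delta_s))$ by the dimension shift of Lemma \ref{lmPosetConst} applied to $0\to\low{\bar{s}}{W}\to\low{s}{W}\to\delta_W\to0$, identifies the answer with $\widetilde{H}^{\ast-1}(|S_{<s}|,\F(s))$ via Lemma \ref{downsetTrivial}, and kills it with the universal coefficient theorem (Lemma \ref{trivialConst}). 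You instead make the restriction map the surjection and study its kernel $K^{\ast}$, the summand of chains through $s$, which you filter by the size of the upper part $\beta\subseteq S_{>s}$. This buys two real improvements. First, your short exact sequence is the one that actually holds at the level of cochain complexes: the differential of a cochain supported on chains avoiding $s$ has components on chains containing $s$, so the obvious inclusion $C^{\ast}_{roos}(S',\F|_{S'})\to C^{\ast}_{roos}(S,\F)$ is not a chain map, and the true quotient of the Roos complexes is your $K^{\ast}$, which strictly contains $C^{\ast}_{roos}(S,\delta_s)$ (the latter is exactly your $q=0$ graded piece) whenever $S_{>s}\neq\varnothing$; your filtration supplies the argument for the remaining strata, one augmented complex of $\ord(S_{<s})$ with constant coefficients $\F(\max(\{s\}\cup\beta))$ per chain $\beta$. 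Second, deducing acyclicity with arbitrary coefficients $W\in\V$ from contractibility of the bounded acyclic complex of finitely generated free abelian groups --- additive functors preserve contracting homotopies --- is a cleaner mechanism in a general abelian category than appealing to a universal coefficient theorem. The sign issue you flag does work out: a vertex $v\in S_{<s}$ occupies the same position in $\alpha\cup\{v\}$ as in $\sigma\cup\{v\}$ because every element of $\{s\}\cup\beta$ lies above it, so the incidence numbers restrict to those of $\ord(S_{<s})$ and the $\alpha=\varnothing$ stratum is precisely the coaugmentation, as you intended.
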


Before we prove \ref{downSetThrm} we need several lemmas.

\begin{lemma} \label{downsetTrivial}
    Let $S$ be a poset, $T \subseteq S$ be a lower order ideal in $S$, i.e., for any $t \in T$ and $s < t$ it holds that $s \in T$. Let $\G_T$ be the following sheaf over $S$:

    $$\G_T(s) = \begin{cases}
        W, & \text{if $s \in T$} \\
        0, & \text{otherwise.}
    \end{cases} $$
where $W$ denotes some object in category. Then $$H^{\ast}(X_S, \G_T) \cong H^{\ast}(|T|, W).$$
\end{lemma}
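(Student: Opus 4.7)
The plan is to reduce the sheaf cohomology of $(S, \G_T)$ to a standard simplicial cohomology computation by passing through the Roos complex. First I would invoke Theorem~\ref{thmGeneralSheafToLocCoeffSyst} to identify $H^{\ast}(X_S, \G_T)$ with $\H_{al}^{\ast}(\ord(S); \widehat{\G_T})$, the simplicial cohomology of the order complex $\ord(S)$ equipped with the local coefficient system $\widehat{\G_T}$ from Construction~\ref{rossConst}. The computation then becomes a direct inspection of this explicit cochain complex.

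Next I would exploit the hypothesis that $T$ is a lower order ideal. For a chain $\sigma = (s_0 < \cdots < s_k)$ in $\ord(S)$, the value $\widehat{\G_T}(\sigma) = \G_T(s_k)$ is $W$ when $s_k \in T$ and $0$ otherwise; since $T$ is closed downward, $s_k \in T$ is equivalent to $\sigma \subseteq T$, i.e., $\sigma$ being a simplex of the subcomplex $\ord(T) \subseteq \ord(S)$. For an inclusion $\sigma \subseteq \tau$ with $\tau \in \ord(T)$, the restrictions of $\G_T$ between elements of $T$ are identities on $W$, so the structure map $\widehat{\G_T}(\sigma \subseteq \tau)$ is $\mathrm{id}_W$; when $\tau \notin \ord(T)$ the target is $0$ and the contribution to the coboundary vanishes, as it does when $\sigma \notin \ord(T)$ (the source is then already $0$).

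Consequently the cochain complex $C^{\ast}(\ord(S); \widehat{\G_T})$ coincides term-by-term and differential-by-differential with the simplicial cochain complex of $\ord(T)$ with constant coefficients $W$. Its cohomology is therefore $H^{\ast}(|\ord(T)|, W) = H^{\ast}(|T|, W)$, which is the claimed isomorphism. The only substantive step in the proof is the combinatorial identification: the lower order ideal condition simultaneously pins the support of $\widehat{\G_T}$ to $\ord(T)$ and forces the restriction maps there to be identities; once this is verified, the conclusion is immediate from the definition of simplicial cohomology with constant coefficients.
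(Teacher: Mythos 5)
Your proposal is correct and follows essentially the same route as the paper: both pass from sheaf cohomology to the Roos complex via Theorem~\ref{thmGeneralSheafToLocCoeffSyst} and then identify that complex with the simplicial cochain complex of $\ord(T)$ with constant coefficients $W$. Your explicit verification that the lower-order-ideal hypothesis pins the support of $\widehat{\G_T}$ to $\ord(T)$ with identity structure maps is in fact the key combinatorial step, which the paper passes over more briskly by citing Theorem~\ref{thmSheafAlgorithmicSimpComp}.
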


\begin{proof}
    Note that by Theorem \ref{thmGeneralSheafToLocCoeffSyst}

    $$H^{\ast}(X_S, \G_T) \cong H_{roos}^{\ast}(S, \G_T)$$

    And by definition (Construction \ref{rossConst}) we know that 

    $$H_{roos}^{\ast}(S, \G_T) = H^{\ast}(C_{roos}^{\ast}(S, \G_T), d_{al})$$

    Theorem \ref{thmSheafAlgorithmicSimpComp} implies the following isomorphism.

    $$H^{\ast}(C_{roos}^{\ast}(S, \G_T), d_{roos}) \cong H^{\ast}(C^{\ast}_{simp}(\ord T, \G_T), d_{simp})$$

    Which is by definition is simplicial cohomology of $\ord T$ with coefficients in $W$ and hence isomorphic to singular cohomology of $|T|$ with coefficients in $W$.

    $$H^{\ast}(C^{\ast}_{simp}(\ord T, \G_T), d_{simp}) = H^{\ast}_{simp}(\ord T, W) \cong H^{\ast}_{sing}(|T|, W)$$

    This concludes the proof.
    
\end{proof}

\begin{lemma} \label{lmPosetConst}
    Let $S$ be a poset, $s \in S$ - an arbitrary vertex, $W$ - some object in abelian category $\V$. Consider following sheaves over $S$.

    $$\low{\bar{s}}{W}(t) = \begin{cases}
        W, & \text{if $t < s$} \\
        0, & \text{otherwise.}
    \end{cases} \quad \quad 
    \delta_W(t) = \begin{cases}
        W, & \text{if $t = s$} \\
        0, & \text{otherwise.}
    \end{cases}$$

    Then $H^j(X_S, \low{\bar{s}}{W}) \cong H^{j-1}(X_S, \delta_W)$
\end{lemma}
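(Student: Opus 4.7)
The plan is to construct a short exact sequence of sheaves bridging $\low{\bar s}{W}$ and $\delta_W$ through the acyclic sheaf $\lceil s \rceil_W$ of Construction \ref{sWConstr}, and extract the desired isomorphism from the long exact cohomology sequence, with the order of terms arranged so that the connecting homomorphism shifts degree in the stated direction.

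Step 1. I would first establish acyclicity of $\lceil s \rceil_W$ in positive degrees. Its support is the lower order ideal $T = S_{\leq s}$, so Lemma \ref{downsetTrivial} gives $H^{\ast}(X_S, \lceil s \rceil_W) \cong H^{\ast}(|S_{\leq s}|; W)$. The realization $|S_{\leq s}|$ is the cone over $|S_{<s}|$ with apex the vertex corresponding to $s$, hence contractible, yielding $H^{0}(X_S, \lceil s \rceil_W) = W$ and $H^{k}(X_S, \lceil s \rceil_W) = 0$ for $k \geq 1$.

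Step 2. I would then set up a short exact sequence of the form
\[
0 \to \low{\bar{s}}{W} \to \lceil s \rceil_W \to \delta_W \to 0,
\]
with the inclusion acting as the identity on $W$ at each stalk $t < s$ (and zero at $t = s$) and the projection as the identity at $t = s$ (and zero at $t < s$). On stalks the composition is manifestly zero and the kernel and cokernel match the outer terms, so exactness reduces to checking that these stalk-level maps assemble into genuine sheaf morphisms by commuting with the restriction arrows at every comparable pair $t_1 \leq t_2$ in $S$.

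Step 3. Applying the Zig-Zag Lemma \ref{lemZigZag} to the short exact sequence and using the vanishing from Step 1, the middle terms $H^{k}(X_S, \lceil s \rceil_W)$ for $k \geq 1$ drop out and the connecting homomorphism produces the isomorphism
\[
H^{j-1}(X_S, \delta_W) \xrightarrow{\cong} H^{j}(X_S, \low{\bar{s}}{W})
\]
for each $j \geq 2$. The case $j = 1$ is handled by reading off the initial segment of the long exact sequence together with the surjection $H^{0}(X_S, \lceil s \rceil_W) = W \to H^{0}(X_S, \delta_W)$.

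The main obstacle is Step 2: for a pair $t_1 < s$, the restriction arrow of $\lceil s \rceil_W$ is the identity on $W$, whereas the corresponding arrow of $\low{\bar{s}}{W}$ must be zero, so the naive identity-based inclusion fails to commute with the restriction squares. To repair this I would need to replace $\lceil s \rceil_W$ by a carefully chosen variant whose diagram arrows from the stalks below $s$ into the stalk at $s$ are zero, while simultaneously preserving acyclicity and the expected kernel and cokernel in the short exact sequence; this sheaf-theoretic adjustment is the technical heart of the argument and determines whether the desired direction of the degree shift in Step 3 actually materializes.
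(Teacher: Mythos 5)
Your closing worry is not a technical wrinkle to be engineered around --- it is fatal to the sequence as written, and identifying the correct repair changes the statement itself. With the paper's conventions (open sets are upper ideals, so restriction maps ascend: $\mathrm{res}^{t}_{t'}\colon \F(t)\to\F(t')$ for $t\le t'$), there is \emph{no} nonzero morphism $\low{\bar{s}}{W}\to\lceil s\rceil_W$ at all: naturality on the square $t\le s$ for any $t<s$ forces the component at $t$ to vanish, since $\mathrm{res}^{t}_{s}$ is $\id_W$ in $\lceil s\rceil_W$ but is the zero map $W\to 0$ in $\low{\bar{s}}{W}$. No ``carefully chosen variant'' of $\lceil s\rceil_W$ rescues the direction you want; the repair is to transpose the sequence, taking $\delta_W$ as the \emph{subsheaf} and $\low{\bar{s}}{W}$ as the \emph{quotient}:
\[
0 \to \delta_W \to \lceil s\rceil_W \to \low{\bar{s}}{W} \to 0,
\]
with identity at $s$ into the middle term and identity at each $t<s$ out of it; these squares do commute. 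Your Step 1 is fine as an alternative to the paper's argument (the paper quotes injectivity of $\lceil s\rceil_W$ via Fact \ref{inj_shv}, while you derive acyclicity from Lemma \ref{downsetTrivial} and contractibility of the cone $|S_{\le s}|$ --- both work). But running your Step 3 on the corrected sequence produces the shift in the \emph{opposite} direction: $H^{j}(X_S,\delta_W)\cong H^{j-1}(X_S,\low{\bar{s}}{W})$ for $j\ge 2$, and the initial segment $0\to H^0(\delta_W)\to W\to H^0(X_S,\low{\bar{s}}{W})\to H^1(\delta_W)\to 0$ shows that in low degree one gets $H^{1}(X_S,\delta_W)\cong\widetilde{H}^{0}(X_S,\low{\bar{s}}{W})$, so reduced cohomology must enter.

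Consequently the lemma as printed is false, not merely unproved by your route: take $S=\{a,b,s\}$ with $a,b<s$ incomparable. Then $H^{\ast}(X_S,\low{\bar{s}}{W})\cong H^{\ast}(\mathrm{two\ points};W)$ is $W^2$ in degree $0$ and zero above, while a direct Roos computation gives $H^{0}(X_S,\delta_W)=0$ and $H^{1}(X_S,\delta_W)\cong W$; at $j=2$ the printed claim would require $0=H^{2}(X_S,\low{\bar{s}}{W})\cong H^{1}(X_S,\delta_W)=W$. The correct statement, $H^{j}(X_S,\delta_W)\cong\widetilde{H}^{j-1}(X_S,\low{\bar{s}}{W})$, is exactly the form the paper actually invokes later in the proof of Theorem \ref{downSetThrm}, so the downstream application survives. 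Note that the paper's own proof of this lemma uses the same transposed sequence you wrote down and overlooks the non-commuting square, so you detected a genuine error present in the paper; but your proposal stops at flagging the obstacle rather than resolving it, and the resolution is not a modification of $\lceil s\rceil_W$ but an exchange of the roles of $\delta_W$ and $\low{\bar{s}}{W}$, which reverses the degree shift and inserts reduced cohomology in the bottom degree.
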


\begin{proof}
    Consider following short exact sequence of sheaves over $S$.

    $$0 \to \low{\bar{s}}{W} \to \low{s}{W} \to \delta_W \to 0$$

    where $\bar{s}_{W}$ is defined by Construction \ref{sWConstr}. Applying Construction \ref{rossConst} gives us the following short exact sequence of differential complexes.

    $$0 \to C^{\ast}(S; \low{\bar{s}}{W}) \to C^{\ast}(S;\low{s}{W}) \to C^{\ast}(S;\delta_W) \to 0$$

    Indeed, from Definition \ref{rossConst} each $C^j_{roos}(S;\F)$ is a direct sum of stalks $\F(s)$ hence $C^*_{roos}(S; \cdot)$ is an exact functor. Zig-Zag Lemma \ref{lemZigZag} implies the existence of the induced long exact sequence in cohomology.

    $$ \dots \to H^j(C^{\ast}(S; \low{\bar{s}}{W})) \to H^j(C^{\ast}(S;\low{s}{W})) \to H^j(C^{\ast}(S;\delta_W)) \to H^{j + 1}(C^{\ast}(S; \low{\bar{s}}{W})) \to \dots$$

    We know that $\low{s}{W}$ is injective (\ref{inj_shv}), therefore $H^j(C^{\ast}(S;\low{s}{W})) \cong H^j(X_S, \low{s}{W}) = 0$ for all $j > 0$. Hence, we have following isomorphism.

    $$H^j(C^{\ast}(S;\delta_W)) \cong H^{j + 1}(C^{\ast}(S; \low{\bar{s}}{W})) $$

    Which corresponds by definition to isomorphism

    $$H^j(X_S, \low{\bar{s}}{W}) \cong H^{j-1}(X_S, \delta_W)$$
\end{proof}

Recall that $s$ is called the \textit{greatest} element of poset $S$, if for any $t\in S, \ t\leq s$

\begin{corollary}
    Let $T$ be a poset with the greatest element $s \in T$. Consider the following sheaf over $T$.

    $$\delta_W(t) = \begin{cases}
        W, & \text{if $t = s$} \\
        0, & \text{otherwise.}
    \end{cases}$$

    Then $\widetilde{H}^j(X_T, \delta_W) \cong \widetilde{H}^{j-1}(|T'|, W)$, where $T' = T \setminus \{ s \}$.
\end{corollary}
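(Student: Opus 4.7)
My plan is to combine the short exact sequence of sheaves underlying Lemma \ref{lmPosetConst} with Lemma \ref{downsetTrivial}, leveraging two consequences of the hypothesis that $s$ is the greatest element of $T$: first, $T' = T_{<s}$ is a lower order ideal of $T$; second, the order complex $|T|$ is the simplicial cone on $|T'|$ with apex $s$ and is therefore contractible.

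Concretely I would work with the short exact sequence
\[
0 \to \delta_W \to \low{s}{W} \to \low{\bar{s}}{W} \to 0
\]
on $X_T$ and evaluate its outer cohomology via Lemma \ref{downsetTrivial}: applied with the lower order ideals $T\subseteq T$ and $T'\subseteq T$, it gives $H^j(X_T, \low{s}{W}) \cong H^j(|T|, W)$ and $H^j(X_T, \low{\bar{s}}{W}) \cong H^j(|T'|, W)$, the former being $W$ in degree $0$ and zero in positive degrees by the cone argument. Feeding these identifications into the long exact sequence from the zig-zag lemma immediately yields the isomorphism $H^j(X_T, \delta_W) \cong H^{j-1}(|T'|, W)$ for all $j \geq 2$, which, since reduced and unreduced cohomology agree above degree $0$, is the desired statement in those degrees.

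The low-dimensional end of the LES collapses to the four-term exact sequence
\[
0 \to H^0(X_T, \delta_W) \to W \xrightarrow{\varepsilon} H^0(|T'|, W) \to H^1(X_T, \delta_W) \to 0,
\]
in which $\varepsilon$ is the diagonal inclusion of constants into locally constant sections of the constant sheaf on $|T'|$. For nonempty $T'$ the map $\varepsilon$ is injective with cokernel $\widetilde{H}^0(|T'|, W)$, which forces $H^0(X_T, \delta_W) = 0 = \widetilde{H}^{-1}(|T'|, W)$ and $H^1(X_T, \delta_W) \cong \widetilde{H}^0(|T'|, W)$, completing the remaining cases. The main obstacle I foresee is precisely this low-dimensional matching: verifying that the connecting homomorphism in the LES is $\varepsilon$ and fixing the convention for reduced sheaf cohomology of $\delta_W$ compatibly with the statement. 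Above degree $1$ everything is a transparent consequence of Lemmas \ref{downsetTrivial} and \ref{lmPosetConst} together with the cone structure of $|T|$.
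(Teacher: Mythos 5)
Your proposal is correct, and it follows the route the paper itself takes: the paper's proof of this corollary is a one-line reduction to Lemma \ref{lmPosetConst}, whose content (the short exact sequence relating $\delta_W$, $\low{s}{W}$, $\low{\bar{s}}{W}$, the zig-zag lemma, and Lemma \ref{downsetTrivial}) is exactly what you spell out. Two places where your write-up is sharper than the source it expands are worth recording. First, you write the sequence as $0 \to \delta_W \to \low{s}{W} \to \low{\bar{s}}{W} \to 0$, with $\delta_W$ as the subsheaf; since the structure maps go upward ($\F(t)\to\F(t')$ for $t\le t'$) and the internal maps of $\low{s}{W}$ are identities, this is the only order in which the stalkwise maps assemble into sheaf morphisms. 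Lemma \ref{lmPosetConst} writes the sequence in the opposite order and accordingly states the shift as $H^j(X_S,\low{\bar{s}}{W})\cong H^{j-1}(X_S,\delta_W)$, i.e.\ $H^j(\delta_W)\cong H^{j+1}(\low{\bar{s}}{W})$, which is the wrong direction: for $T=\{a,b<s\}$ with $a,b$ incomparable one has $H^1(X_T,\delta_W)=W=\widetilde{H}^0(|T'|,W)$ while $H^2(X_T,\low{\bar{s}}{W})=0$. Your version produces the shift that the corollary actually asserts. Second, you make explicit the four-term low-degree sequence and identify the map $W\to H^0(|T'|,W)$ as the diagonal $\varepsilon$, which is precisely what converts unreduced into reduced cohomology in degrees $0$ and $1$; the paper leaves this entirely implicit. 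A further small gain: killing $H^{>0}(X_T,\low{s}{W})$ via Lemma \ref{downsetTrivial} and the cone structure of $|T|$ avoids invoking Fact \ref{inj_shv}, which requires $W$ to be injective, whereas the corollary allows an arbitrary object $W$. The only loose end is the degenerate case $T'=\varnothing$, where the claimed isomorphism holds only under the convention $\widetilde{H}^{-1}(\varnothing,W)=W$ with $\widetilde{H}^0$ of the skyscraper taken reduced; you correctly flag this as a convention issue rather than a mathematical gap.
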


\begin{proof}
    Proof follows directly from Lemma \ref{lmPosetConst}.
\end{proof}

\begin{lemma} \label{trivialConst}
    Let $T$ be a poset such that $H_{\ast}(|T|, \Z)$ are trivial (i.e., $|T|$ has the same homology as a point). Then for any object $W$ in abelian category $\V$ the cohomology of a constant sheaf $W$ over $T$ are also trivial.
\end{lemma}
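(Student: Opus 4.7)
The plan is to reduce the computation to the singular cohomology of $|T|$ with coefficients in $W$ via Lemma \ref{downsetTrivial}, and then transfer the hypothesized integral acyclicity of $|T|$ to acyclicity of the cochain complex with coefficients in $W$ using the natural $\Abel$-enrichment that every abelian category possesses.

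First I would apply Lemma \ref{downsetTrivial} with the ambient poset $S$ taken to be $T$ itself and with $T$ regarded as the improper lower order ideal in itself. The sheaf $\G_T$ from that lemma then specializes to the constant sheaf with value $W$ on $X_T$, and the lemma supplies an isomorphism
$$H^{*}(X_T, W) \cong H^{*}(|T|, W).$$
Unwinding the proof of Lemma \ref{downsetTrivial}, the right-hand side is the cohomology of the simplicial cochain complex $C^\bullet$ with $C^j = \bigoplus_{\dim \sigma = j} W$ (sum over $j$-simplices of $\ord T$) and differential determined entirely by the incidence numbers $\inc{\sigma}{\tau} \in \{0, \pm 1\}$. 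In particular, $C^\bullet$ is obtained from the integer simplicial chain complex $C_\bullet(\ord T; \Z)$ via the rule $\Z^r \mapsto W^r$, in which every integer entry $n$ of a map between free abelian groups is reinterpreted as the morphism $n \cdot \id_W$ in $\V$, using the $\Abel$-enrichment of $\V$.

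The hypothesis that $|T|$ has the homology of a point means that the augmented chain complex $\cdots \to C_1(\ord T;\Z) \to C_0(\ord T; \Z) \to \Z \to 0$ is exact. Since the groups $C_n(\ord T; \Z)$ are free (hence projective in $\Abel$), this exact complex splits, and standard homological algebra yields a genuine chain homotopy equivalence in the category of complexes of abelian groups between $C_\bullet(\ord T; \Z)$ and $\Z$ concentrated in degree $0$. All of the relevant chain maps and the chain homotopy itself are given by integer matrices on chosen free bases. Applying the rule $\Z^r \mapsto W^r$ to each of these matrices produces a cochain homotopy equivalence in $\V$ between $C^\bullet$ and $W$ concentrated in degree $0$; consequently the cohomology vanishes outside degree $0$, where it equals $W$.

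The main (but entirely formal) obstacle I anticipate is checking that the translation \textit{integer matrix} $\leadsto$ \textit{morphism in $\V$} is truly functorial: that it preserves both sums and products of matrices, so that chain-map equations and the chain-homotopy relation over $\Z$ carry over to genuine identities in $\V$. Once this $\Abel$-enrichment bookkeeping is in place, the proof reduces to the classical acyclicity of a simplicial complex with the homology of a point.
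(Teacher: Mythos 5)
Your proposal is correct, and its first step (specializing Lemma \ref{downsetTrivial} to $S=T$ with $T$ the improper lower ideal, reducing the claim to the vanishing of $H^{*}(|T|,W)$) is exactly the reduction implicit in the paper. Where you diverge is in the second half: the paper disposes of the remaining claim in one line by citing the Universal Coefficient Theorem, whereas you prove it by hand --- the augmented integral simplicial chain complex of $\ord T$ is exact with free terms, hence split, hence chain-contractible onto $\Z$ in degree $0$; the contraction is given by integer matrices, and the additive functor $\Z^{r}\mapsto W^{r}$ (well-defined on the category of finitely generated free abelian groups with chosen bases via the $\Abel$-enrichment of $\V$, and preserving matrix sums and products by additivity) transports this contraction to a cochain contraction of $C^{\bullet}(\ord T;W)$. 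Your "formal obstacle" is not an obstacle: additivity of the transport functor is all that is needed, and additive functors preserve split exactness and chain homotopies. In fact your route is arguably the more defensible one in this generality: the classical UCT, with its $\Tor$/$\mathrm{Ext}$ correction terms, is stated for coefficients in abelian groups or modules over a PID, and invoking it for an object $W$ of an arbitrary abelian category $\V$ requires precisely the kind of enrichment argument you spell out. So your proof buys a self-contained justification valid for all of $\V$, at the cost of some bookkeeping that the paper elides; the paper's appeal to the UCT buys brevity but, read literally, only covers $\V=\Abel$ or $\ko\Vect$.
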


\begin{proof}
    This follows directly from Universal coefficient theorem in cohomology.
\end{proof}

Now we are ready to prove Theorem \ref{downSetThrm}.

\begin{proof}{Proof of Theorem \ref{downSetThrm}}

Consider sheaf $\F' = \F|_{S'}$ and following sheaf on $S$.

$$\delta_s(t) = \begin{cases}
        \F(s), & \text{if $t = s$} \\
        0, & \text{otherwise.}
    \end{cases}$$

Let us construct following short exact sequence of Roos cochain complexes.

$$0 \to C^{\ast}_{roos}(S', \F') \xrightarrow[]{f} C^{\ast}_{roos}(S, \F) \xrightarrow[]{g} C^{\ast}_{roos}(S, \delta_s) \to 0$$

Indeed, it is exact on stalks: if $t\neq s$ it takes the form $0 \to F(t) \to F(t) \to 0 \to 0$, and if $t=s$, it takes the form $0 \to 0 \to F(s) \to F(s) \to 0$. By Zig-zag Lemma \ref{lemZigZag} it induces the following long exact sequence in cohomology.

    $$ \dots \to H^j(C^{\ast}(S, \delta_s)) \to H^{j + 1}(C^{\ast}(S', \F')) \to H^{j + 1}(C^{\ast}(S, \F)) \to H^{j + 1}(C^{\ast}(S, \delta_s)) \to \dots$$

Denote by $\low{\bar{\G}}{s}$ following sheaf over $S$.

$$\low{\bar{\G}}{s}(t) = \begin{cases}
        \F(s), & \text{if $t < s$} \\
        0, & \text{otherwise.}
    \end{cases}$$

Note that $H^j(C^{\ast}(S, \delta_s)) \cong \widetilde{H}^{j - 1}(S, \low{\bar{\G}}{s})$ by Lemma \ref{lmPosetConst}. We also know that $\widetilde{H}^{j - 1}(S, \low{\bar{\G}}{s}) \cong \widetilde{H}^{j - 1}(|S_{<s}|, \F(s))$ according to Lemma \ref{downsetTrivial}. Since $\widetilde{H}^{j - 1}(|S_{<s}|, \Z)$ is trivial by assumption, applying Lemma \ref{trivialConst} shows us triviality of $\widetilde{H}^{j - 1}(|S_{<s}|, \F(s))$. Therefore $H^{j + 1}(C^{\ast}(S', \F')) \cong H^{j + 1}(C^{\ast}(S, \F))$.

Let us bring all of the above transformations together.

$$ H^j(C^{\ast}(S, \delta_s)) \overset{\text{Lemma \ref{lmPosetConst}}}{\cong} \widetilde{H}^{j - 1}(S, \low{\bar{\G}}{s}) \overset{\text{Lemma \ref{downsetTrivial}}}{\cong} \widetilde{H}^{j - 1}(|S_{<s}|, \F(s)) \overset{\text{Lemma \ref{trivialConst}}}{\cong} 0$$

\end{proof}

\begin{remark}
    Note that Theorem \ref{downSetThrm} coincides and generalizes the Theorem \ref{cohom_inv} (in particular, downbeat case). Indeed, if $v$ is downbeat vertex, then, by definition, there exists exactly one $u$ such that there exists arrow from $u$ to $v$ in Hasse diagram notation. Therefore, $S_{<v}$ has the greatest vertex, hence $|S_{<v}|$ is a cone, hence contractible and $\widetilde{H}_{\ast}(|S_{<v}|, \Z) = 0$. 
\end{remark}

Theorem \ref{downSetThrm} allows us to formulate following corollary for posets with constant coefficients.

\begin{corollary} \label{stongGeneralization}
    Let $S$ be a poset, $s \in S$ be such vertex that $H_{\star}(|S_{<s}|)$ are trivial (i.e., $|S_{<s}|$ has the homology of a point). Then

    $$H^{\ast}(X_S, \Z) \cong H^{\ast}(X_{S'}, \Z)$$

    where $S' = S \setminus \{ s \}$.
\end{corollary}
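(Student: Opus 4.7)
The plan is to derive Corollary \ref{stongGeneralization} as an immediate specialization of Theorem \ref{downSetThrm} to the case of constant coefficients. Let $\F$ denote the constant sheaf on $X_S$ with stalks $\F(t) = \Z$ for every $t$ and all restriction maps equal to the identity. Under this choice, the hypothesis of the theorem (that $H_\star(|S_{<s}|,\Z)$ is trivial) coincides verbatim with the hypothesis of the corollary, so the only item requiring any work is checking that the restricted sheaf on the right-hand side matches what the corollary asks for.

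The single step that requires verification is that the restriction $\F|_{S'}$ of this constant sheaf to the subposet $S' = S\setminus\{s\}$ is again the constant sheaf $\Z$ on $X_{S'}$. This follows directly from Definition \ref{imF}: for the inclusion $a\colon S'\hookrightarrow S$, the pullback $a^\ast\F$ satisfies $(a^\ast\F)(t) = \F(a(t)) = \Z$ for every $t\in S'$, and the restriction maps of $a^\ast\F$ are the identities inherited from those of $\F$. Thus $\F|_{S'}$ is precisely the constant sheaf $\Z$ on $X_{S'}$, and Theorem \ref{downSetThrm} then yields the isomorphism $H^\ast(X_S,\F)\cong H^\ast(X_{S'},\F|_{S'})$, which reads as the desired statement $H^\ast(X_S,\Z)\cong H^\ast(X_{S'},\Z)$.

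I do not foresee any genuine obstacle: the corollary is entirely subsumed by Theorem \ref{downSetThrm} once the above identification of the restricted sheaf is made. If one wished to interpret the right-hand side in topological rather than sheaf-theoretic language, one could further invoke the McCord theorem to identify $H^\ast(X_{S'},\Z)$ with the singular cohomology of the geometric realization $|K_{S'}|$, thereby recovering the classical flavor of Stong's collapse results; however, this is not needed to prove the stated corollary.
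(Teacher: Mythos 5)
Your proposal is correct and matches the paper's own proof, which likewise obtains the corollary by applying Theorem \ref{downSetThrm} to the constant sheaf $\Z$. The extra verification you include --- that the restriction of the constant sheaf to $S'$ is again the constant sheaf --- is a harmless elaboration of the same one-line argument.
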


\begin{proof}
    This corollary directly follows from \ref{downSetThrm} by taking $\F$ as constant sheaf $\Z$.
\end{proof}

One might ask whether Corollary \ref{stongGeneralization} can be applied to acyclic \textit{upsets} ($|S_{>s}|$). Indeed, this is true, but only for regular posets without a sheaf. Let us now formulate and prove a homotopic version of this theorem. 

\begin{theorem} \label{homotopicStong}
    Let $S$ be a poset, $s \in S$ such vertex that either $|S_{<s}| \cong \pt$ or $|S_{>s}| \cong \pt$. Then $|S| \cong |S'|$, where $S' = S \setminus \{ s \}$.
\end{theorem}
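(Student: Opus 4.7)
The plan is to work with the order complex realization $|S| = |\ord(S)|$, so that $|S'|$ is the subcomplex obtained by deleting the vertex $s$. First I would decompose
\[
|S| \;=\; |S'| \cup \bar{\mathrm{st}}(s), \qquad |S'| \cap \bar{\mathrm{st}}(s) \;=\; \mathrm{lk}(s),
\]
where $\bar{\mathrm{st}}(s)$ and $\mathrm{lk}(s)$ denote the closed simplicial star and the link of $s$ in $\ord(S)$. A simplex of the link is, by definition, a chain of $S \setminus \{s\}$ whose elements are all comparable with $s$, and such a chain splits uniquely as a chain in $S_{<s}$ followed by a chain in $S_{>s}$. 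This gives the canonical identification
\[
\mathrm{lk}(s) \;\cong\; |S_{<s}| * |S_{>s}|,
\]
the simplicial join; this is the central combinatorial input.

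Next I would appeal to the standard topological fact that a join $X * Y$ is contractible as soon as one of its factors is contractible: a contraction of $X$ extends linearly in the join coordinate to a contraction of $X * Y$ onto the subspace $Y$. Under either hypothesis of the theorem the link is therefore contractible. Since $\bar{\mathrm{st}}(s) = s * \mathrm{lk}(s)$ is itself a cone on $\mathrm{lk}(s)$, a contracting homotopy on $\mathrm{lk}(s)$ descends through the cone coordinate to produce a retraction $r \colon \bar{\mathrm{st}}(s) \to \mathrm{lk}(s)$ whose restriction to $\mathrm{lk}(s)$ is the identity.

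Finally, $r$ and $\id_{|S'|}$ agree on the common subspace $\mathrm{lk}(s)$, so they glue to a well-defined retraction $\rho \colon |S| \to |S'|$. I would then verify that $\rho$ is a homotopy equivalence by applying the gluing lemma to the comparison of pushout squares
\[
\bar{\mathrm{st}}(s) \;\leftarrow\; \mathrm{lk}(s) \;\to\; |S'| \quad \text{and} \quad \mathrm{lk}(s) \;\leftarrow\; \mathrm{lk}(s) \;\to\; |S'|,
\]
whose pushouts are $|S|$ and $|S'|$ respectively. The left-hand maps are subcomplex inclusions, hence cofibrations, and the three vertical comparison maps $r, \id_{\mathrm{lk}(s)}, \id_{|S'|}$ are homotopy equivalences (the first between two contractible CW complexes). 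The only non-routine point in the proof is the explicit descent of the contracting homotopy to the retraction $r$ fixing $\mathrm{lk}(s)$ pointwise; once that formula is in place, everything else is bookkeeping, and the case of contractible upset follows symmetrically from the case of contractible downset through the join identification.
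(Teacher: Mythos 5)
Your argument is correct and is essentially the standard proof: the paper itself only delegates this statement to Barmak's Lemma~3.1, and both that lemma and the paper's own homology version (Theorem~\ref{homologyStong}) rest on exactly your decomposition $|S| = |S'| \cup \bar{\mathrm{st}}(s)$ with $\mathrm{lk}(s) \cong |S_{<s}| * |S_{>s}|$ contractible, upgraded from Mayer--Vietoris to the gluing lemma for cofibrations. The only imprecision is the claim that the contraction of one join factor contracts $X*Y$ \emph{onto} $Y$ --- it deformation retracts onto the cone $\{x_0\}*Y$, which is then contractible --- but this does not affect the conclusion.
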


\begin{proof}
    The proof can be found in \cite[Lemma 3.1]{barmak}.
\end{proof}

One is tempted to formulate Theorem \ref{homotopicStong} within the category of sheaved spaces. However, this requires first formalizing the notion of homotopic equivalence for sheaved spaces. We consider this a promising direction for future research and subsequent papers.

Following Lemmas \ref{joinHomology}, \ref{cupHomology} are well-known results; however, for the sake of completeness, we provide their proof. They will be used in the proof of the homology version of Theorem \ref{homotopicStong}.

\begin{lemma} \label{joinHomology}
    Let $A, B$ be topological spaces. Assume that either $A$ or $B$ are acyclic. Then $A \mathbin{\ast} B$ is also acyclic.
\end{lemma}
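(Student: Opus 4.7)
The plan is to apply Mayer--Vietoris to a standard open cover of the join. I will write $A \ast B = (A \times B \times [0,1])/{\sim}$ with the usual identifications collapsing $A$ at $t=0$ and $B$ at $t=1$, and set $U = \{t < 2/3\}$, $V = \{t > 1/3\}$. Routine deformation retractions give $U \simeq A$, $V \simeq B$, and $U \cap V \simeq A \times B$; under these equivalences the two inclusions $U \cap V \hookrightarrow U$ and $U \cap V \hookrightarrow V$ become the coordinate projections $\pi_A$ and $\pi_B$ respectively.

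Assuming without loss of generality that $A$ is acyclic, I will next invoke the K\"unneth formula: since $\widetilde{H}_*(A) = 0$ and $H_0(A) \cong \Z$, every K\"unneth summand for $H_*(A \times B)$ involving $\widetilde{H}_i(A)$ vanishes, and the surviving factor shows that $(\pi_B)_* : \widetilde{H}_*(A \times B) \to \widetilde{H}_*(B)$ is an isomorphism in every degree. Feeding this into the reduced Mayer--Vietoris sequence
\[
\cdots \to \widetilde{H}_n(A \times B) \to \widetilde{H}_n(A) \oplus \widetilde{H}_n(B) \to \widetilde{H}_n(A \ast B) \to \widetilde{H}_{n-1}(A \times B) \to \cdots
\]
collapses the left-hand map in every degree to $\pm(\pi_B)_*$, an isomorphism, since $\widetilde{H}_n(A) = 0$. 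Exactness then sandwiches $\widetilde{H}_n(A \ast B)$ between the cokernel of one isomorphism and the kernel of the next, forcing $\widetilde{H}_n(A \ast B) = 0$ for all $n$.

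The only nontrivial inputs are the two standard facts just cited: the homotopy types of $U$, $V$, $U \cap V$, and the K\"unneth collapse when one factor is acyclic. Neither is a real obstacle, so I do not foresee a hard step; the lemma reduces cleanly to Mayer--Vietoris combined with K\"unneth.
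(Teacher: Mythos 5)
Your proof is correct, but it takes a different route from the paper. The paper's proof is a one-liner: it directly invokes the K\"unneth formula for joins (citing Spanier), which expresses $\widetilde{H}_{r+1}(A \mathbin{\ast} B)$ as a sum of terms $\widetilde{H}_i(A)\otimes\widetilde{H}_j(B)$ and $\Tor(\widetilde{H}_i(A),\widetilde{H}_j(B))$, every one of which dies because one factor has vanishing reduced homology. You instead decompose the join by the standard open cover $U\simeq A$, $V\simeq B$, $U\cap V\simeq A\times B$, use the ordinary K\"unneth theorem for products to see that $(\pi_B)_*\colon \widetilde{H}_*(A\times B)\to\widetilde{H}_*(B)$ is an isomorphism when $A$ is acyclic, and then read off the vanishing of $\widetilde{H}_*(A\mathbin{\ast}B)$ from exactness of Mayer--Vietoris. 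What your approach buys is self-containedness: you only need the product K\"unneth theorem and Mayer--Vietoris, both more elementary than the join formula (which is itself typically proved by exactly this kind of cover, or via $A\mathbin{\ast}B\simeq\Sigma(A\wedge B)$). What the paper's approach buys is brevity. Two small points to tidy in your write-up: the identification of the surviving K\"unneth summand $H_0(A)\otimes H_n(B)$ with the map $(\pi_B)_*$ deserves one sentence of justification (naturality of the K\"unneth splitting, or the observation that a slice $\{a_0\}\times B\hookrightarrow A\times B$ induces an isomorphism on homology and $\pi_B$ retracts onto it); and acyclicity of $A$ implicitly forces $A$ to be nonempty and connected, which is what makes $H_0(A)\cong\Z$ and keeps the degenerate cases of the cover out of the way.
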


\begin{proof}
    To calculate the homology of $A \mathbin{\ast} B$ we will use Kunneth Formula for join \cite{spanier}, which states the following.

    $$\widetilde{H}_{r + 1}(A \mathbin{\ast} B) \cong 
    \sum_{i + j = r} \widetilde{H}_i(A) \otimes \widetilde{H}_j(B) \oplus \sum_{i + j = r - 1} \Tor(\widetilde{H}_i(A), \widetilde{H}_j(B))$$

    Note that $\widetilde{H}_i(A) \otimes \widetilde{H}_j(B) = 0$  and $\Tor(\widetilde{H}_i(A), \widetilde{H}_j(B)) = 0$ since by assumption either $\widetilde{H}_{\ast}(A) = 0$ or $\widetilde{H}_{\ast}(B) = 0$. Therefore $\widetilde{H}_{r + 1}(A \mathbin{\ast} B) = 0$.

\end{proof}

\begin{lemma} \label{cupHomology}
    Let $A, B$ be topological spaces, $C = A \cap B$. Assume that $H_{\ast}(C, \Z) = 0$. Then 
    
    $$H_{\ast}(A \underset{C}{\cup} B, \Z) \cong H_{\ast}(A \vee B, \Z)$$ 
    
    In particular, if $B = \Cone(C)$, then 
    
    $$H_{\ast}(A \underset{C}{\cup} B, \Z) \cong H_{\ast}(A \vee B, \Z) \cong {H}_{\ast}(A)$$
\end{lemma}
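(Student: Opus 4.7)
The plan is to invoke the reduced Mayer--Vietoris sequence for the cover of $X = A \cup_C B$ by the two subspaces $A$ and $B$, and then compare the result with the standard computation of homology of a wedge. Assuming $C \hookrightarrow A$ and $C \hookrightarrow B$ form a good cover (which will be the case in the applications, where all spaces are geometric realizations of simplicial complexes and $C$ is a subcomplex of both), Mayer--Vietoris provides the long exact sequence
\[
\cdots \to \widetilde{H}_n(C) \to \widetilde{H}_n(A) \oplus \widetilde{H}_n(B) \to \widetilde{H}_n(A \cup_C B) \to \widetilde{H}_{n-1}(C) \to \cdots
\]
(Here I interpret the hypothesis $H_\ast(C,\Z)=0$ in the same sense as it is used elsewhere in the paper, namely that $C$ has the reduced homology of a point; otherwise $C$ would be empty and the statement would be vacuous.)

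First I would plug $\widetilde{H}_\ast(C) = 0$ into the sequence above. Every third term vanishes, so the sequence splits into short exact pieces of the form $0 \to \widetilde{H}_n(A) \oplus \widetilde{H}_n(B) \to \widetilde{H}_n(A \cup_C B) \to 0$, yielding the natural isomorphism
\[
\widetilde{H}_n(A \cup_C B) \;\cong\; \widetilde{H}_n(A) \oplus \widetilde{H}_n(B)
\]
for every $n \ge 0$. Next I would invoke the standard computation for a wedge of well-pointed spaces, $\widetilde{H}_n(A \vee B) \cong \widetilde{H}_n(A) \oplus \widetilde{H}_n(B)$, which follows from Mayer--Vietoris applied to the cover of $A\vee B$ by slight neighbourhoods of $A$ and $B$ (whose intersection deformation retracts to the basepoint). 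Composing the two isomorphisms gives $\widetilde{H}_\ast(A \cup_C B) \cong \widetilde{H}_\ast(A \vee B)$, and passing from reduced to unreduced homology follows immediately because both sides are connected whenever $A$, $B$, $C$ are.

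For the second assertion, I would specialize to $B = \Cone(C)$. The cone is contractible, so $\widetilde{H}_\ast(B)=0$, and combined with the hypothesis $\widetilde{H}_\ast(C)=0$ the displayed isomorphism reduces to $\widetilde{H}_n(A \cup_C \Cone(C)) \cong \widetilde{H}_n(A)$, which is precisely the claim. The same fact can also be viewed directly: $A \cup_C \Cone(C)$ is the mapping cone of the inclusion $C \hookrightarrow A$, whose reduced homology fits in a long exact sequence with $\widetilde{H}_\ast(C)$ and $\widetilde{H}_\ast(A)$, and the vanishing of the former forces the mapping cone to have the homology of $A$.

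The main obstacle, and really the only delicate point, is the good-cover hypothesis required to run Mayer--Vietoris with $C$ literally as the intersection $A \cap B$ (as opposed to a thickening). In the intended applications $A$, $B$, $C$ will be polyhedra obtained from order complexes, so $(A,C)$ and $(B,C)$ are CW pairs and the cover $\{A,B\}$ of $A\cup_C B$ is excisive; I would therefore simply state this regularity hypothesis at the beginning and refer the reader to the standard CW version of Mayer--Vietoris rather than re-proving it.
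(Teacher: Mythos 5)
Your proposal is correct and follows essentially the same route as the paper: both apply the Mayer--Vietoris sequence to the cover $\{A,B\}$ of $A\cup_C B$, use the vanishing of $\widetilde{H}_\ast(C)$ to split it into isomorphisms $\widetilde{H}_n(A)\oplus\widetilde{H}_n(B)\cong\widetilde{H}_n(A\cup_C B)$, identify this with $\widetilde{H}_n(A\vee B)$, and then specialize to $B=\Cone(C)$ being contractible. Your added care about reduced versus unreduced homology and the excisive-cover hypothesis is a reasonable tightening of details the paper leaves implicit, but it does not change the argument.
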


\begin{proof}
    Denote $A \cup B$ by $X$. Consider following Mayer–Vietoris exact sequence.

    $$\dots \to H_{j + 1}(X) \to H_j(C) \to H_j(A) \oplus H_j(B) \to H_j(X) \to H_{j - 1}(C) \to \dots$$

    By assumption $H_{\ast}(C) = 0$. Therefore, $H_j(A) \oplus H_j(B) \cong H_j(A \cup B)$. Since $H^{\ast}(A \vee B) \cong H_j(A) \oplus H_j(B) \cong H_j(A \cup B)$, which concludes the proof.

    The corollary follows directly from the facts that $H_{\ast}(A \vee B) \cong {H}_{\ast}(A)$ if $B$ is contractible, which holds since $B$ is a $\Cone$.
\end{proof}

\begin{theorem}[Homology version of Theorem \ref{homotopicStong}] \label{homologyStong}
    Let $S$ be a poset, $s \in S$ such vertex that either $\widetilde{H}_{\ast}(|S_{<s}|, \Z) = 0$ or $\widetilde{H}_{\ast}(|S_{>s}|, \Z) = 0$, (i.e., either $|S_{<s}|$ or $|S_{>s}|$ has the homology of a point). Then 
    
    $$\widetilde{H}_{\ast}(|S|, \Z) \cong \widetilde{H}_{\ast}(|S'|, \Z)$$
    
    where $S' = S \setminus \{ s \}$.
    
\end{theorem}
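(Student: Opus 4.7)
The plan is to use the standard simplicial decomposition of $|S| = |\ord(S)|$ into the closed star of $s$ and the rest, identify the link of $s$ as a join, and then invoke Lemmas \ref{joinHomology} and \ref{cupHomology}. This will treat both hypotheses (acyclicity of $|S_{<s}|$ or of $|S_{>s}|$) uniformly.

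First I would set up the geometric picture. In the order complex $\ord(S)$, a simplex is a chain in $S$. Let $\overline{\mathrm{st}}(s)$ denote the closed star of the vertex $s$, i.e., the union of (the geometric realizations of) all chains that contain $s$, and let $\mathrm{lk}(s)$ denote its link, consisting of chains $\sigma \subset S\setminus\{s\}$ every element of which is comparable with $s$. A chain in the link decomposes uniquely as a disjoint union of a chain $\sigma_{<}$ in $S_{<s}$ and a chain $\sigma_{>}$ in $S_{>s}$, and any pair of such chains yields a chain in the link; this is exactly the combinatorial definition of the simplicial join. Thus we obtain the key identification
\[
\mathrm{lk}(s) \;\cong\; |S_{<s}|\ast |S_{>s}|,
\qquad
\overline{\mathrm{st}}(s) \;\cong\; \Cone\!\bigl(|S_{<s}|\ast |S_{>s}|\bigr).
\]
Moreover, any simplex in $\ord(S)$ either lies entirely in $\ord(S')$ or contains $s$, so
\[
|S| \;=\; |S'|\cup \overline{\mathrm{st}}(s),
\qquad
|S'|\cap \overline{\mathrm{st}}(s) \;=\; \mathrm{lk}(s).
\]

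Next I would run the homological argument. By assumption, one of $|S_{<s}|$ or $|S_{>s}|$ is acyclic over $\Z$, so Lemma \ref{joinHomology} gives
$\widetilde{H}_{\ast}(\mathrm{lk}(s),\Z) = \widetilde{H}_{\ast}(|S_{<s}|\ast |S_{>s}|,\Z) = 0$. Applying the cone case of Lemma \ref{cupHomology} with $A = |S'|$, $C = \mathrm{lk}(s)$, and $B = \overline{\mathrm{st}}(s) = \Cone(C)$ then yields
\[
\widetilde{H}_{\ast}(|S|,\Z)\;\cong\;\widetilde{H}_{\ast}(|S'|,\Z),
\]
which is the desired conclusion. (Alternatively, one can run Mayer--Vietoris for the cover $|S| = |S'|\cup \overline{\mathrm{st}}(s)$ directly, using that $\overline{\mathrm{st}}(s)$ is contractible and $\mathrm{lk}(s)$ is acyclic.)

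The only step that requires some care is the combinatorial identification $\mathrm{lk}(s) \cong |S_{<s}|\ast |S_{>s}|$; once this is established, the rest is a direct application of the two preparatory lemmas. I expect this step, and in particular the bookkeeping that distinguishes the degenerate cases where $S_{<s}$ or $S_{>s}$ is empty (in which cases the join degenerates to the other factor and the statement is still consistent), to be the main, though routine, obstacle.
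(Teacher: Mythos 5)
Your proposal is correct and follows essentially the same route as the paper's own proof: identify $\mathrm{lk}(s)$ with the join $|S_{<s}|\ast|S_{>s}|$, deduce its acyclicity from Lemma \ref{joinHomology}, and then apply the cone case of Lemma \ref{cupHomology} to the decomposition $|S| = |S'|\cup_{\mathrm{lk}(s)}\Cone(\mathrm{lk}(s))$. The only difference is presentational: you flag the degenerate cases where $S_{<s}$ or $S_{>s}$ is empty, which the paper leaves implicit.
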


\begin{proof}
    
    Let $L$ denote the $\mathrm{link}_{\ord(S)}(s)$. $L$ consists of chains in $S$ that remain chains when we include $s$ in them. Therefore $L$ can be represented in a following form

    $$L = \mathrm{link}_{\ord(S)}(s) = \Big\{ \sigma = \sigma_{<} \sqcup \sigma_{>} \ \Big| \ 
    \begin{matrix}
        \sigma_{<} = (j_1 < \dots < j_k), \ j_k < s \\
        \sigma_{>} = (t_1 < \dots < t_n), \ t_n > s
    \end{matrix}
    \ \Big\}$$

    Now let us use the definition of a join of simplicial complexes.

    $$\{ \sigma = \sigma_{<} \sqcup \sigma_{>} \ | \dots \} = \{ \sigma_{<} \} \mathbin{\ast} \{ \sigma_{>} \}$$

    Considering the geometric realization of $L$, we obtain the following.
    
    $$|L| = |\{ \sigma_{<} \} \mathbin{\ast} \{ \sigma_{>} \}| = |\ord S_{<s}| \mathbin{\ast} |\ord S_{>s}|$$

    By assumption, either $|\ord S_{<s}|$ or $ |\ord S_{>s}|$ is acyclic. By Lemma \ref{joinHomology}, it implies that $|L|$ is also acyclic. Let us return to the original problem.

    $$|S| = |\ord S| = |\ord S'| \underset{|L|}{\cup} \Cone |L| $$

    Lemma \ref{cupHomology} implies that $\widetilde{H}_{\ast}(|S|, \Z) \cong \widetilde{H}_{\ast}(|S'|, \Z)$ since $|L|$ is acyclic. This concludes the proof.
    
\end{proof}

\begin{remark}
    Note that these simplifications present generalizations of Stong's beat collapses. There exist posets that lack beat vertices in the sense of Stong's definition, yet can still be simplified using Theorem \ref{homologyStong}. Let us provide an example.
\end{remark}

\begin{example}
    Consider the classic example of a contractible yet non-collapsible simplicial complex: Bing’s House \cite{bing}. Let $S$ be a poset whose geometric realization is Bing’s House. Construct a new poset $T$ by adding two vertices $u$ and $v$ to $S$, with the property that $s \leq u$ and $s \leq v$ for all $s \in S$, and such that $u$ and $v$ are incomparable with each other. In this construction, neither $u$ nor $v$ is an upbeat or a downbeat vertex. However, Theorem~\ref{homologyStong} implies that both can be removed without changing the cohomology, since Bing’s House is acyclic. Moreover, this holds for any sheaf $\mathcal{F}$ taking values in $\V$ over $T$.
\end{example}

\newcommand{\etalchar}[1]{$^{#1}$}


\begin{thebibliography}{BGC{\etalchar{+}}22}

\bibitem[AJB18]{gkmsheaves}
Ibrahem Al-Jabea and Thomas~John Baird.
\newblock Cohomology of gkm-sheaves, 2018.

\bibitem[AS22]{ayzen}
Anton Ayzenberg and Konstantin Sorokin.
\newblock Topological approach to diagonalization algorithms, 2022.

\bibitem[Bai13]{baird}
Thomas Baird.
\newblock Gkm sheaves and nonorientable surface group representations, 2013.

\bibitem[Bar10]{barmak}
Jonathan~Ariel Barmak.
\newblock On quillen's theorem a for posets, 2010.

\bibitem[Bar22]{sheafnn}
Federico Barbero.
\newblock Sheaf neural networks with connection laplacians.
\newblock In {\em ICML 2022}, 2022.

\bibitem[BGC{\etalchar{+}}22]{sheafdiff}
Cristian Bodnar, Francesco~Di Giovanni, Benjamin~Paul Chamberlain, Pietro Liò, and Michael~M. Bronstein.
\newblock Neural sheaf diffusion: A topological perspective on heterophily and oversmoothing in gnns.
\newblock {\em NeurIPS 2022}, 2022.
\newblock arXiv:2202.04579.

\bibitem[Bin64]{bing}
R.~H. Bing.
\newblock Some aspects of the topology of 3-manifolds related to the poincare conjecture.
\newblock {\em Lectures on Modern Mathematics II (T. L. Saaty, ed.)}, 1964.

\bibitem[BPP18]{compute}
Jean-Daniel Boissonnat, Siddharth Pritam, and Divyansh Pareek.
\newblock Strong collapse for persistence, 2018.

\bibitem[Cur14]{curry}
Justin Curry.
\newblock Sheaves, cosheaves and applications, 2014.
\newblock Arxiv pre-print.

\bibitem[EH10]{comtop}
Herbert Edelsbrunner and John Harer.
\newblock {\em Computational Topology: An Introduction}.
\newblock American Mathematical Society, 2010.

\bibitem[Ghr14]{eltop}
Robert Ghrist.
\newblock {\em Elementary Applied Topology}.
\newblock Createspace, 1.0 edition, 2014.

\bibitem[GS24]{macaulay2}
Daniel~R. Grayson and Michael~E. Stillman.
\newblock Macaulay2, a software system for research in algebraic geometry, 2024.
\newblock \url{https://faculty.math.illinois.edu/Macaulay2/}.

\bibitem[Har77]{algem}
Robin Hartshorne.
\newblock {\em Algebraic Geometry}, volume~52 of {\em Graduate Texts in Mathematics}.
\newblock Springer New York, NY, 1 edition, 1977.

\bibitem[Ive86]{iversen}
Birger Iversen.
\newblock {\em Cohomology of Sheaves}.
\newblock Springer Berlin, Heidelberg, 1986.

\bibitem[McC66]{mccord}
M.~C. McCord.
\newblock Singular homology groups and homotopy groups of finite topological spaces.
\newblock {\em Duke Math. J.}, 33:465--474, 1966.

\bibitem[Roo61]{roos}
Jan-Erik Roos.
\newblock Sur les foncteurs d\'eriv\'es de $\varprojlim$. applications.
\newblock {\em Comptes rendus de l'Acad\'emie des Sciences}, 252:3702--3704, 1961.

\bibitem[SdS18]{ringedspaces}
F.~Sancho~de Salas.
\newblock Homotopy of finite ringed spaces.
\newblock {\em Journal of Homotopy and Related Structures}, 13:481--501, 2018.

\bibitem[Spa66]{spanier}
E.~H. Spanier.
\newblock {\em Algebraic topology}.
\newblock McGraw-Hill, 1966.

\bibitem[Sto66]{stong}
R.~E. Stong.
\newblock Finite topological spaces.
\newblock {\em Trans. Amer. Math. Soc.}, 123:325--340, 1966.

\bibitem[{The}24]{sagemath}
{The Sage Developers}.
\newblock {SageMath, the Sage Mathematics Software System (Version 9.8)}, 2024.
\newblock \url{https://www.sagemath.org}.

\end{thebibliography}
\end{document}